\newtheorem{theorem}{Theorem}[section]
\newtheorem{corollary}[theorem]{Corollary}
\newtheorem{lemma}[theorem]{Lemma}
\newtheorem{proposition}[theorem]{Proposition}
\newtheorem{conjecture}[theorem]{Conjecture}
\newtheorem{remark}[theorem]{Remark}
\newtheorem{example}[theorem]{Example}
\title{A module isomorphism between $H^*_T(G/P)\otimes H^*_T(P/B)$ and $H^*_T(G/B)$}
\author{Elizabeth Drellich}
\address{Department of Mathematics, University of North Texas, Denton, TX 76201 U.S.A.}
\email{elizabeth.drellich@unt.edu}
\author{Julianna Tymoczko}
\address{Department of Mathematics, Smith College, Northampton, MA 01063 U.S.A.}
\email{jtymoczko@smith.edu}
\thanks{Both authors were partially supported by NSF grant DMS--1248171.  The second author was partially supported by an Alfred P.~Sloan Research Fellowship.}
\begin{document}

\begin{abstract}
We give an explicit (new) morphism of modules between $H^*_T(G/P) \otimes H^*_T(P/B)$ 
%tensor
and $H^*_T(G/B)$ and prove (the known result) that the two modules are isomorphic.  Our map identifies submodules of the cohomology of the flag variety that are isomorphic to each of $H^*_T(G/P)$ and $H^*_T(P/B)$.  With this identification, the map is simply the product within the ring $H^*_T(G/B)$.  We use this map in two ways.  First we describe  module bases for $H^*_T(G/B)$ that are different from traditional Schubert classes and from each other.  Second we analyze a $W$-representation on $H^*_T(G/B)$ via restriction to subgroups $W_P$.  In particular we show that the character of the Springer representation on $H^*_T(G/B)$ is a multiple of the restricted representation of $W_P$ on $H^*_T(P/B)$.
\end{abstract}

\maketitle

\section{Introduction}

{\color{black}In this paper we construct a large family of distinct bases for  the equivariant cohomology of the generalized flag variety.  To do this we give an explicit formula for the Leray-Hirsch isomorphism for the fibration of flag varieties $P/B \rightarrow G/B \rightarrow G/P$.} The Leray-Hirsch theorem says   
\[H^*(P/B) \otimes H^*(G/P) \cong H^*(G/B)\] 
but does not provide an explicit map. In fact the procedure in the Leray-Hirsch theorem sends classes through series of quotients, isomorphisms, and identifications in a spectral sequence, so explicitly writing the output is challenging. Indeed, a prori the output of the Leray-Hirsch map is a basis for the cohomology of the total space as a module over the cohomology of the base space.   We bypass these subtleties by choosing explicit bases of Schubert classes for each cohomology module as a submodule of $H^*(G/B)$ and proving the isomorphism directly. 

Moreover we solve this problem in equivariant rather than ordinary cohomology. The equivariant cohomology of a variety is an enhanced version of the ordinary cohomology ring that records  information about an underlying group action on the variety. Certain computational tools can make equivariant cohomology easier to construct than ordinary cohomology, as well as permitting us to recover ordinary cohomology.  (Knutson and Tao's work computing the structure constants of the equivariant and ordinary cohomology ring of $G(k,n)$ is one example of this principle  \cite{Knutson-Tao}.)  These computational tools can be used in many important cases, including generalized flag varieties $G/B$ and partial flag varieties $G/P$ both with the left-multiplication action of the torus $T \subseteq B \subseteq P$.

We consider a presentation of the equivariant cohomology $H^*_T(G/B)$ due to Kostant and Kumar \cite{KostantKumar} and use it to construct a module isomorphism between the tensor product $H^*_T(G/P) \otimes H^*_T(P/B)$ 
%tensor
and $H^*_T(G/B)$ all treated as modules over $\mathbb{C}[\mathfrak{t}^*]$.  The map naturally descends to a module isomorphism on the ordinary cohomology.  The fact that these modules are isomorphic is not new \cite[Theorem 2.1]{Dou04}.  %Our map, however, is new.   
But in Schubert calculus people want {\em very} explicit answers---even down to specific numbers and elementary combinatorial formulas.  The main result of this paper is a pleasingly elementary identification of classes inside $H^*_T(G/B)$ that realize Leray-Hirsch and provide a useful tool for fields like Schubert calculus (see Theorem \ref{thm: distinct bases}).  %, even if it does not entirely surprise some experts. 

To construct our map, we identify each factor in the tensor product  $H^*_T(G/P) \otimes H^*_T(P/B)$ 
%tensor
with a  submodule of $H^*_T(G/B)$.  The bilinear module isomorphism is multiplication of classes inside the ring $H^*_T(G/B)$.   More precisely our main theorem states:

\begin{theorem}
\label{thm: main theorem}
Identify $H^*_T(G/P)$ and $H^*_T(P/B)$ isomorphically with submodules of $H^*_T(G/B)$ as described in Section \ref{section: submodules of G/B}.  Then the multiplication map
\[ p \otimes q \mapsto pq\]
induces a bilinear isomorphism of modules
\[H^*_T(G/P) \otimes H^*_T(P/B) \cong H^*_T(G/B).\]
%tensor
\end{theorem}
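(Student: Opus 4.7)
The plan is to show both sides are free $R$-modules of rank $|W|$ over $R := \mathbb{C}[\mathfrak{t}^*]$, and that multiplication sends an explicit $R$-basis of the tensor product to an $R$-basis of $H^*_T(G/B)$. Freeness is standard: $H^*_T(G/B)$ has the Schubert basis $\{\sigma_w\}_{w \in W}$, while $H^*_T(G/P)$ and $H^*_T(P/B)$ have parabolic Schubert bases indexed by $W^P$ and $W_P$, so the tensor product has rank $|W^P|\cdot|W_P| = |W|$. The multiplication map is $R$-linear because multiplication in $H^*_T(G/B)$ is $R$-bilinear, and between free $R$-modules of the same finite rank, surjectivity is equivalent to being an isomorphism. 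So it suffices to prove that the set $\{a_u b_v : u \in W^P,\, v \in W_P\}$ is an $R$-basis of $H^*_T(G/B)$, where $\{a_u\}$ and $\{b_v\}$ are the images of the respective Schubert bases under the submodule identifications of Section \ref{section: submodules of G/B}.

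To verify the basis property, my main tool is the Kostant--Kumar GKM description: a class in $H^*_T(G/B)$ is determined by its localizations at the $T$-fixed points $w \in W$, and the transition matrix from $\{a_u b_v\}$ to $\{\sigma_w\}$ is governed by the products $(a_u b_v)(w) = a_u(w)\, b_v(w)$. I would order $W$ using the parabolic factorization $w = u(w)\, v(w) \in W^P \cdot W_P$ and aim to show: (i) $a_u(u'v')$ depends only on $u'$ (because $a_u$ is pulled back from $G/P$) and vanishes unless $u \leq u'$ in the Bruhat order on $W^P$; (ii) the analogous triangularity holds for $b_v(u'v')$ in the $v, v'$ entries, inherited from the identification in Section \ref{section: submodules of G/B}; and (iii) the diagonal localizations $(a_u b_v)(uv)$ agree up to units with the nonzero products of positive roots making up the Schubert diagonal entry $\sigma_{uv}(uv)$. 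Together these statements make the transition matrix triangular with invertible diagonal over $R$, so the two bases are $R$-equivalent.

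The main obstacle is condition (ii). The behavior of $a_u$ follows from the standard $W_P$-invariance of $G/P$-pullbacks, but the corresponding behavior of $b_v$ depends on the precise identification of $H^*_T(P/B)$ inside $H^*_T(G/B)$, which is the novel ingredient of the paper. In particular, a Schubert class $\sigma_v$ with $v \in W_P$ need not vanish on the cosets $u' W_P$ for arbitrary $u' \in W^P$, so the identification chosen in Section \ref{section: submodules of G/B} likely uses modified classes designed to restore the needed triangularity; checking this is the technical heart of the argument. As a fallback, a graded Nakayama argument could sidestep the explicit localization: reduce the map modulo $R^+$ to obtain the ordinary-cohomology multiplication map $H^*(G/P) \otimes H^*(P/B) \to H^*(G/B)$, which is an isomorphism by the classical Leray--Hirsch theorem applied to $P/B \to G/B \to G/P$; since both modules are graded and free over $R$, this lifts back to the desired equivariant isomorphism.
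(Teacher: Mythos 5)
Your primary approach shares the skeleton of the paper's proof: set up the localization matrix for the products $\{\sigma_u\sigma_v : u\in W^P,\, v\in W_P\}$ against the fixed points $u'v'$, order $W$ by the parabolic factorization, and use Bruhat triangularity to show the transition matrix is invertible. Step (i) is exactly the paper's first lemma — since every reduced word for $u \in W^P$ ends outside $W_P$ (hence sits inside the prefix $u'$ of $u'v'$), one gets $\sigma_u(u'v') = \sigma_u(u')$, which vanishes unless $u \le u'$, so the full matrix is block upper-triangular in the $(u,u')$ indices. So far so good.

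The gap is in step (ii), and you correctly sensed danger there but guessed wrong about the fix. The identification of $H^*_T(P/B)$ inside $H^*_T(G/B)$ in Section \ref{section: submodules of G/B} is the \emph{naive} one: it simply maps $\sigma_w \in H^*_T(P/B)$ to the $G/B$-Schubert class $\sigma_w$ for $w \in W_P$; there are no modified classes. Consequently the diagonal blocks $\bigl(\sigma_w(uw')\bigr)_{w,w'\in W_P}$ are genuinely \emph{not} triangular — $\sigma_w(uw')$ does not depend only on $w'$, nor does it vanish for $w \not\le w'$, because reduced words for $w$ inside $uw'$ can split between the prefix $u$ and the suffix $w'$. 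The paper's key technical insight, which your proposal is missing, is to apply Billey's formula to write
\[
\sigma_w(uw') \;=\; \sum_{\substack{s \text{ a suffix} \\ \text{of } w}} \sigma_{ws^{-1}}(u)\cdot u\bigl(\sigma_{s}(w')\bigr),
\]
which exhibits the diagonal block as a product $M\cdot uN$, where $M_{ws} = \sigma_{ws^{-1}}(u)$ (if $s$ is a suffix of $w$, else $0$) is upper-triangular with $1$'s on the diagonal and hence invertible, and $N_{s,w'} = \sigma_s(w')$ is the $P/B$ localization matrix, whose rows are linearly independent; the twist by $u$ preserves that independence. So the block has linearly independent rows even though it is not triangular. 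Without this factorization your transition-matrix argument does not close.

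Your fallback — reduce modulo the augmentation ideal, invoke classical Leray--Hirsch on $P/B \to G/B \to G/P$, and lift by graded Nakayama — is a logically coherent alternative, but it inverts the paper's logic (the paper \emph{derives} ordinary Leray--Hirsch as Corollary \ref{cor: ordinary cohomology} from the equivariant result, precisely because its goal is to avoid the spectral-sequence black box and give an explicit basis). You would also need to separately check the Leray--Hirsch hypothesis, namely that the chosen classes in $H^*(G/B)$ restrict to a basis of the fiber $H^*(P/B)$, which is roughly the same content that your step (ii) was supposed to handle. So the fallback does not actually sidestep the technical heart; it just relocates it.
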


Guillemin-Sabatini-Zara realize this isomorphism differently for a class of varieties called {\em GKM spaces}, proving a combinatorial version of the Leray-Hirsch theorem for fibrations of graphs associated to GKM spaces \cite{GKMfiberbundles}.  As an application, they show their results apply to the map $P/B \to G/B \to G/P$ in classical types \cite[Section 5]{GKMfiberbundles} and apply it to a number of specific Grassmannians and symplectic varieties \cite{Balancedfiberbundles}. Establishing their hypotheses for $P/B \to G/B \to G/P$ in classical types takes longer than our direct proof for all types.  {\color{black} Indeed in their calculations they use an interesting basis whose elements are stable under an action of the Weyl group.  In a colloquial sense Guillemin-Sabatini-Zara's basis is the ``opposite" of our flow-up classes, which are closer to the classes constructed from Morse flows or Bialynicki-Birula decompositions. } %[[need to add something about the other GSZ paper]]

We give two applications of our result in Section \ref{applications}.   First our explicit module isomorphism gives rise to a large collection of module bases of $H_T^*(G/B)$ indexed by the parabolic subgroups $B\subsetneq P\subsetneq G$.  An immediate corollary of the isomorphism is that if $\mathcal{B}_{G/P}$ is any module basis for $H^*_T(G/P)$ and $\mathcal{B}_{P/B}$ is any module basis for $H^*_T(P/B)$ then the products $\{bb': b \in \mathcal{B}_{G/P}, b' \in \mathcal{B}_{P/B}\}$ form a module basis for $H^*_T(G/B)$.  
{\color{black}\begin{theorem}
Fix two distinct parabolic subgroups $P \neq Q$ with $B \subsetneq P, Q\subsetneq G$.  Let $\mathcal{B}_P$ denote the product basis of $H^*_T(G/B)$ obtained from the equivariant Schubert bases for $H^*_T(G/P)$ and $H^*_T(P/B)$, and similarly for $\mathcal{B}_Q$.   Then the bases $\mathcal{B}_P$ and $\mathcal{B}_Q$ are distinct.
\end{theorem}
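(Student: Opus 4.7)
The plan is to exhibit, for any distinct parabolics $P\neq Q$, a single Schubert class $\sigma_w^{G/B}$ of length two that is a basis element of $\mathcal{B}_P$ but not of $\mathcal{B}_Q$. The starting observation is that under the identifications of Section \ref{section: submodules of G/B}, a Schubert class $\sigma_w^{G/B}$ lies in $\mathcal{B}_P$ automatically whenever $w\in W^P\cup W_P$: it is realized either as $\sigma_w^{G/P}\cdot 1$ (for $w\in W^P$) or as $1\cdot\sigma_w^{P/B}$ (for $w\in W_P$), and symmetrically for $\mathcal{B}_Q$.

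Next, I would choose $w=s_\alpha s_\beta$ with $\alpha,\beta$ adjacent in the Dynkin diagram (so $m_{\alpha\beta}\geq 3$), placing $w$ in $W^P\cup W_P$ but outside $W^Q\cup W_Q$. Since $P\neq Q$ the sets $\Pi_P,\Pi_Q$ differ, and a short case analysis produces such a pair. If $\Pi_P\cap\Pi_Q\neq\emptyset$, I take $\alpha\in\Pi_P\setminus\Pi_Q$ adjacent to some $\beta\in\Pi_P\cap\Pi_Q$: then $w\in W_P$, and $ws_\beta=s_\alpha$ shows $s_\beta$ is a right descent of $w$, so $s_\beta\in W_Q$ gives $w\notin W^Q$ while $s_\alpha\notin W_Q$ gives $w\notin W_Q$. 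If $\Pi_P\cap\Pi_Q=\emptyset$, connectedness of the Dynkin diagram (for $G$ simple) produces an adjacent pair with $\beta\in\Pi_Q$ and $\alpha\in\Pi\setminus\Pi_Q$, and the same descent analysis places $w$ outside $W^Q\cup W_Q$ while the choice of $\alpha$ puts $w$ in $W^P$ or $W_P$.

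To finish I would show $\sigma_w^{G/B}\notin\mathcal{B}_Q$. Any basis element of $\mathcal{B}_Q$ of degree two is either $\sigma_{w'}^{G/B}$ for some $w'\in W^Q\cup W_Q$ (impossible for our $w$ by linear independence of equivariant Schubert classes) or a product $\sigma_{s_\gamma}\cdot\sigma_{s_\delta}$ with $\gamma\notin\Pi_Q$ and $\delta\in\Pi_Q$. By Chevalley's formula, when $m_{\gamma\delta}\geq 3$ this product contains both $\sigma_{s_\delta s_\gamma}^{G/B}$ and $\sigma_{s_\gamma s_\delta}^{G/B}$ in its Schubert expansion and so is not a single Schubert class; the only way to get a single Schubert class is $m_{\gamma\delta}=2$, which yields $\sigma_{s_\gamma s_\delta}^{G/B}$. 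Since our $w=s_\alpha s_\beta$ has non-commuting support ($m_{\alpha\beta}\geq 3$), its only reduced expression is $s_\alpha s_\beta$ and it cannot arise as such a commuting product, so $\sigma_w^{G/B}\notin\mathcal{B}_Q$. The main delicate point I anticipate is the second step's case analysis when $\Pi_P$ and $\Pi_Q$ are separated in the Dynkin diagram by simple roots lying in neither, where the adjacency argument needs to walk carefully along the diagram; the built-in condition $m_{\alpha\beta}\geq 3$ is precisely what makes the third step uniform across types.
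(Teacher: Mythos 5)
Your overall strategy---exhibit a length-two Schubert class in one basis but not the other, using the Chevalley fact that $\sigma_{s_\gamma}\sigma_{s_\delta}$ is a single Schubert class only when $s_\gamma,s_\delta$ commute---is sound and, if carried through, would give a cleaner argument than the paper's. However, there is a genuine gap in Step~2 where you construct the element $w=s_\alpha s_\beta$, and it is not in the place you flagged. In your Case~1 ($\Pi_P\cap\Pi_Q\neq\emptyset$) you assume there exists $\alpha\in\Pi_P\setminus\Pi_Q$ adjacent to some $\beta\in\Pi_P\cap\Pi_Q$. This can fail: take $W$ of type $A_5$ with $\Pi_P=\{\alpha_1,\alpha_3,\alpha_5\}$ and $\Pi_Q=\{\alpha_1,\alpha_5\}$. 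Then $\Pi_P\cap\Pi_Q=\{\alpha_1,\alpha_5\}$, $\Pi_P\setminus\Pi_Q=\{\alpha_3\}$, and $\alpha_3$ is adjacent to neither $\alpha_1$ nor $\alpha_5$; moreover $\Pi_Q\setminus\Pi_P=\emptyset$, so swapping $P$ and $Q$ does not help. Your construction produces no candidate $w$ here, even though the theorem is true (e.g.\ $w=s_2s_3$ works, since $\alpha_3$ is the right descent of $w$ and lies in $\Pi_P$ but not $\Pi_Q$, and $\alpha_2$ lies in neither). The correct existence statement is that one can always find an adjacent pair $(\alpha,\beta)$ with $\alpha\notin\Pi_Q$, $\beta\in\Pi_Q$, and \emph{either} $\alpha\in\Pi_P$ \emph{or} $\beta\notin\Pi_P$ (or the same with $P,Q$ swapped); this follows from the fact that, on a connected Dynkin diagram, the set of ``directed boundary'' edges of a proper nonempty subset of simple roots determines that subset, so $\Pi_P\neq\Pi_Q$ forces their directed boundaries to differ. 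That argument is not in your write-up, and it is not merely a matter of ``walking carefully along the diagram'' in the case $\Pi_P\cap\Pi_Q=\emptyset$ (which is in fact the unproblematic case, since any boundary edge of $\Pi_Q$ then works).

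For comparison, the paper avoids the length-two restriction entirely: it takes a reflection $s_i\in W_P\setminus W_Q$, a shortest Dynkin-diagram path from $\alpha_i$ to some $\alpha_j$ with $s_j\in W_Q$, and builds the possibly longer element $v_R=s_jvs_i$; it then evaluates the two candidate basis elements at explicit Weyl group points such as $s_jvs_is_jv$ or $s_is_jvs_i$ using Billey's formula and shows the localizations differ. This sidesteps the subtlety that trips up your construction, at the cost of a longer computation. If you supply the directed-boundary existence lemma, your length-two version becomes a legitimate and somewhat more conceptual alternative proof.
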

Theorem \ref{thm: distinct bases} proves this result for connected $G$ and Corollary \ref{cor: disconnected G} generalizes the theorem to disconnected $G$. }One way to state the core problem of Schubert calculus is: analyze combinatorially and explicitly the cohomology ring of a generalized flag variety $G/B$ in terms of the basis of Schubert classes. Thus these parabolic bases allow us to optimize the choice of basis to make particular computations in Schubert calculus as simple as possible.

As another application we show how these bases can be used to analyze a well-known action of $W$ on $H^*_T(G/B)$ called the Springer representation.  In particular Theorem \ref{thm: kostant kumar character} says that the character of the restricted action of $W_P$ on $H^*_T(G/B)$ is the scalar multiple $|W_P| \cdot \chi$ where $\chi$ is the character of the $W_P$--representation on $H^*_T(P/B)$.   

In Section \ref{main theorem} we prove Theorem \ref{thm: main theorem} in the equivariant setting.  Our proofs use what many call GKM theory, after Goresky-Kottwitz-MacPherson's algebraic--combinatorial description of equivariant cohomology rings \cite{GKMtheory}.  The GKM presentation comes with an explicit formula for the Schubert classes that is due to Billey \cite[Theorem 4]{Billey} and Anderson-Jantzen-Soergel \cite[Remark p. 298]{A-J-S}.  These tools permit an elegant combinatorial and linear-algebraic proof of Theorem \ref{thm: main theorem}.  The result then descends to ordinary cohomology (see Corollary \ref{cor: ordinary cohomology}). 

We are grateful to J.~Matthew Douglass for showing us his work on both equivariant and ordinary cohomology isomorphism and inspiring this proof; to Alexander Yong for useful discussions; and to the anonymous referee for very helpful comments.

\section{Background}

We denote by $G$ a complex reductive linear algebraic group and fix a Borel subgroup $B$. We denote the maximal torus in $B$ by $T$ and the Weyl group associated to $G/B$ by $W$.  Let $P$ be any parabolic subgroup containing $B$.  

Let $W_P$ denote the subgroup of $W$ associated to $P$.  This is also a Weyl group, specifically the Weyl group of $P/B$.  For elements $w \in W$ the length $\ell(w)$ refers to the minimal number of simple reflections required to write $w$ as a word in the generators $\{s_i: i=1,2,\ldots,n\}$ of $W$.  Let $W^P$ denote the subset of minimal-length coset representatives of $W/W_P$.  The following fact is so essential to our work that we state it explicitly here; many texts give proofs, including Bj\"{o}rner-Brenti \cite[Lemma 2.4.3]{Bjorner-Brenti}.

\begin{proposition}
\label{prop last letter}
Every minimal-length word for each element $v \in W^P$ ends in a simple reflection $s_i \not \in W_P$.
\end{proposition}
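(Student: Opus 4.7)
The plan is to deduce the proposition from the standard characterization of $W^P$ as those $v \in W$ satisfying $\ell(vs) > \ell(v)$ for every simple reflection $s$ generating $W_P$. Once that characterization is in hand, the proposition follows in a single line.

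First I would recall (or quote from Bj\"orner--Brenti) the equivalence
\[v \in W^P \iff \ell(vs_i) > \ell(v) \text{ for every simple reflection } s_i \in W_P.\]
The forward direction is immediate: if $v$ is the minimal-length representative of its coset $vW_P$, then for any simple $s_i \in W_P$ the element $vs_i$ lies in the same coset and is distinct from $v$, hence $\ell(vs_i) > \ell(v)$. The reverse direction is the content of the usual exchange-condition argument: if $v$ were not of minimal length in its coset, some reduced expression $v = uw$ with $w \in W_P$ and $\ell(u) < \ell(v)$ would force, by induction on $\ell(w)$, the existence of a simple reflection $s_i \in W_P$ with $\ell(vs_i) < \ell(v)$.

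Now to prove the proposition itself, fix $v \in W^P$ and any reduced expression
\[v = s_{i_1} s_{i_2} \cdots s_{i_k}\]
with $k = \ell(v)$. Assume for contradiction that the final letter $s_{i_k}$ lies in $W_P$. Then $vs_{i_k} = s_{i_1} \cdots s_{i_{k-1}}$ has length at most $k - 1$, so $\ell(vs_{i_k}) < \ell(v)$. By the characterization above this contradicts $v \in W^P$, so $s_{i_k} \notin W_P$. Since the reduced expression was arbitrary, every minimal-length word for $v$ ends in a simple reflection outside $W_P$.

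There is no real obstacle here: the only place where care is needed is to be sure one applies the correct direction of the characterization, namely that membership in $W^P$ forces the length to go \emph{up} upon right multiplication by any simple reflection in $W_P$. Everything else is a direct consequence of the definition of reduced expression.
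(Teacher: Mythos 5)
Your proof is correct. The paper itself does not prove this proposition but simply cites Bj\"orner--Brenti \cite[Lemma 2.4.3]{Bjorner-Brenti}; the characterization you invoke, namely that $v \in W^P$ if and only if $\ell(vs_i) > \ell(v)$ for every simple reflection $s_i$ generating $W_P$, is exactly the standard ingredient that reference uses, and your one-line contradiction from a reduced word ending in $s_{i_k} \in W_P$ is the expected deduction.
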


\subsection{Restricting to fixed points}\label{section: submodules of G/B}

We use a presentation of torus-equivariant cohomology that is often referred to as {\em GKM theory}, after Goresky, Kottwitz, and MacPherson \cite{GKMpaper}, though key ideas are due to many others \cite{ AtiyahBott, Kirwan, Chang-Skjelbred} (see \cite[Section 1.7]{GKMpaper} for a fuller history).  For suitable spaces $X$ the inclusion map of fixed points $X^T \hookrightarrow X$ induces an injection on cohomology $H^*_T(X) \hookrightarrow H^*_T(X^T)$.  Straightforward algebraic conditions determine the image of the injection explicitly \cite[Lemma 2.3]{Chang-Skjelbred}, though we do not use them in this manuscript.  

Through this map $H^*_T(X) \hookrightarrow H^*_T(X^T)$ we think of equivariant classes $p \in H^*_T(X)$ as collections of polynomials in $\bigoplus_{v \in X^T} \mathbb{C}[\mathfrak{t^*}] \cong H^*_T\left(  X^T \right)$. We use functional notation to describe the elements $p \in H^*_T(X)$ meaning that for each $v \in X^T$ we have $p(v) \in \mathbb{C}[\mathfrak{t}^*]$.  

GKM theory applies to varieties like $G/B$, $G/P$, and $P/B$ that have only even-dimensional ordinary cohomology \cite[Theorem 14.1(1)]{GKMpaper}.   In fact each of $G/B$, $G/P$, and $P/B$ is a CW-complex whose cells are Schubert cells indexed by the elements of $W$, $W^P$, and $W_P$ respectively.  The fixed point sets of $G/B$, $G/P$, and $P/B$ are also naturally isomorphic to $W$, $W^P$, and $W_P$.    

As modules over $\mathbb{C}[\mathfrak{t}^*]$ the equivariant cohomology of $G/B$, $G/P$, and $P/B$ each have a basis of (equivariant) Schubert classes that are %.  The Schubert classes $\{\sigma_w\}$ are 
again indexed by the elements of $W$, $W^P$, and $W_P$ respectively.  
% \cite{Kum02, Julianna's g/p}
The restrictions $\sigma_w(u)$ of each Schubert class $\sigma_w$ to each fixed point $u$ are given explicitly by what we call {\em Billey's formula} (see Section \ref{section: Billey's formula}).  The formula is the same in all three cases $G/B$, $G/P$, and $P/B$.  Thus the map that sends the Schubert class $\sigma_w \in H^*_T(G/P)$ to the corresponding Schubert class $\sigma_w \in H^*_T(G/B)$ is a module isomorphism onto its image, and similarly for $P/B$.   We identify the images of $H^*_T(G/P)$ and $H^*_T(P/B)$ in $H^*_T(G/B)$ with the modules $H^*_T(G/P)$ and $H^*_T(P/B)$ themselves, so 
\[H^*_T(G/P) \cong \textup{span}_{\mathbb{C}[\mathfrak{t}^*]}\langle \sigma_v: v \in W^P \rangle \subseteq H^*_T(G/B)\]
and 
\[H^*_T(P/B) \cong \textup{span}_{\mathbb{C}[\mathfrak{t}^*]}\langle \sigma_w: w \in W_P \rangle \subseteq H^*_T(G/B).\]

%The conditions that determine the rings $H^*_T(G/P)$ and $H^*_T(P/B)$ also agree with the corresponding conditions inside of $H^*_T(G/B)$ \cite{Kum02, Julianna's g/p}.   This gives injections of the GKM presentations of $H^*_T(G/P)$ and $H^*_T(P/B)$ into the GKM presentation of $H^*_T(G/B)$.   
{\bf For $G/P$ this inclusion is only a homomorphism of modules and not a homomorphism of rings.}

%tensor
The map $H^*_T(G/P) \otimes H^*_T(P/B) \rightarrow H^*_T(G/B)$ that we consider is the ordinary product of classes inside $H^*_T(G/B)$.

\subsection{Billey's formula} \label{section: Billey's formula}

This section describes an explicit combinatorial formula for evaluating the polynomial $\sigma_v(u)$ in $\mathbb{C}[\mathfrak{t}^*]$.  Anderson, Jantzen, and Soergel originally discovered this formula~\cite{A-J-S}; Billey independently found it as well~\cite[Theorem 4]{Billey}.  While proven originally for $G/B$ it also holds for $G/P$ \cite[Theorem 7.1]{TymoczkoG/P} and $P/B$  \cite[Corollary 11.3.14]{Kum02}.  Fix a reduced word for $u=s_{b_1}s_{b_2} \cdots s_{b_{\ell(u)}}$ and define $\mathbf{r}(\mathbf{i},{u})=s_{b_1}s_{b_2} \cdots s_{b_{i-1}}(\alpha_{b_i}).$  Then
%$u$, $\tilde{u}=s_{b_1}s_{b_2} \cdots s_{b_{\ell(u)}}$ and let $\mathbf{r}(\mathbf{i},\tilde{u})=s_{b_1}s_{b_2} \cdots s_{b_{i-1}}(\alpha_{b_i}).$  Then

\begin{equation}
\label{eq:Billeys}
\sigma_v(u)= \sum \limits_{\substack{\text{reduced words} \\ v=s_{b_{j_1}}s_{b_{j_2}} \cdots s_{b_{j_{\ell(v)}}} }} \left( \prod \limits_{i=1}^{\ell(v)}  \mathbf{r}(\mathbf{j_i},{u}) \right).
\end{equation}

\begin{lemma}
\label{prop:billey} The polynomial $\sigma_v(u)$ has the following properties:
\begin{enumerate}
\item The polynomial $\sigma_v(u)$ does not depend on the choice of reduced word for $u$ \cite[Theorem 4]{Billey}.
\item The polynomial $\sigma_v(u)$ is homogeneous of degree $\ell(v)$ \cite[Corollary 5.2]{Billey}.
\item  \label{prop: non-zero} The polynomial $\sigma_v(u) \neq 0$ if and only if $v\leq u$\cite[Proposition 4.24]{KostantKumar} .
\item For any $u$ we have $\sigma_e(u)=1$.
%\item \label{property I need to add} For any simple reflection $s_i$ such that $u s_i > w$ the polynomial $\sigma_v(u s_i)$ is
%$$\begin{cases}
% \sigma_v(u) &\text{ if } v s_i > v\\
% \sigma_v(u) +w(\alpha_i)\cdot \sigma_{vs_i}(u) & \text{ if } vs_i<v
%\end{cases}
%$$
\end{enumerate}
\end{lemma}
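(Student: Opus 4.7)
The plan is to verify the four properties directly from formula (\ref{eq:Billeys}), in order of increasing difficulty. Property (4) is immediate: when $v = e$, the only reduced word for $v$ is the empty word, and the empty product is $1$, so $\sigma_e(u) = 1$ for every $u$. Property (2) follows by inspecting each summand: each factor $\mathbf{r}(\mathbf{j_i}, u) = s_{b_1} \cdots s_{b_{j_i - 1}}(\alpha_{b_{j_i}})$ is a root, hence a homogeneous linear form in $\mathbb{C}[\mathfrak{t}^*]$; each summand is a product of exactly $\ell(v)$ such factors, so the entire sum is homogeneous of degree $\ell(v)$.

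For property (1), the plan is to appeal to Matsumoto's theorem, which states that any two reduced words for $u$ are related by a sequence of braid and commutation moves on simple reflections. I would check invariance of the right-hand side of (\ref{eq:Billeys}) under each such move: fix a braid move occurring at some window of the reduced word for $u$, then pair each reduced subword for $v$ supported in that window with its image under the move, and verify that the associated products of $\mathbf{r}$-factors agree. Subwords whose support is disjoint from the window are preserved term-by-term. The calculation reduces to checking a handful of identities on roots of the relevant rank-two root subsystem, which is the content of Billey's original argument.

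The main obstacle is property (3). The ``only if'' direction is a corollary of the subword characterization of Bruhat order: if $v \not\leq u$, then no reduced expression for $v$ occurs as a subword of the fixed reduced word for $u$, so the sum in (\ref{eq:Billeys}) is empty and $\sigma_v(u) = 0$. For the ``if'' direction the same characterization guarantees at least one nonzero summand, and the real work lies in ruling out cancellation among the summands. My plan is to single out a distinguished reduced subword---for instance the lexicographically leftmost subword for $v$ inside the fixed reduced word for $u$---and show that its product of roots contributes a monomial (in the basis of simple roots of $\mathbb{C}[\mathfrak{t}^*]$) that is strictly largest among all summands with respect to a suitable term order, and hence cannot be cancelled. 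Alternatively, one can invoke the Kostant-Kumar evaluation of $\sigma_v(u)$ in the nil-Hecke algebra \cite{KostantKumar}, where non-vanishing on the Bruhat interval is transparent from the upper-triangular structure of the dual basis.
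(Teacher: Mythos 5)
The paper does not prove this lemma; it cites Billey for properties (1) and (2) and Kostant--Kumar for property (3), and leaves (4) unremarked. So there is no ``paper proof'' to compare against---your proposal is supplying arguments where the paper supplies references. With that understood, your treatments of (4) and (2) are correct and complete, and your outline of (1) via Matsumoto's theorem and braid-move invariance is the standard approach (it is essentially Billey's argument and would take real work to carry out in the braid cases, but the strategy is sound).

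The place where your plan is genuinely off is property (3). You correctly identify the ``only if'' direction via the subword characterization of Bruhat order, and you correctly worry about cancellation in the ``if'' direction. But the device you propose---a distinguished leftmost subword whose monomial dominates in a term order---is not how this is usually resolved and is more delicate than it needs to be: the dominating-monomial claim for a product of $\ell(v)$ linear forms is not obviously true as stated. The observation that makes (3) essentially trivial and that your plan misses is \emph{positivity}: because $u=s_{b_1}\cdots s_{b_{\ell(u)}}$ is reduced, every root $\mathbf{r}(\mathbf{i},u)=s_{b_1}\cdots s_{b_{i-1}}(\alpha_{b_i})$ is a \emph{positive} root (this is the standard fact that reduced prefixes never send the next simple root negative). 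Hence every summand in (\ref{eq:Billeys}) is a product of positive roots, i.e.\ a polynomial with nonnegative integer coefficients in the simple roots, and a nonempty sum of such terms cannot vanish. No term order is needed. Your fallback via the nil-Hecke algebra is fine---and is in fact what the paper cites---but the positivity argument is the elementary route you were looking for.
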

\begin{example}
Let $G/B$ have Weyl group $W=A_2$ and let $u=s_1s_2s_1$ and $v=s_1$.  The word $v$ is found as a subword of $s_1s_2s_1$ in the two places $\mathbf{s_1} s_2s_1$ and $s_1s_2\mathbf{s_1}$. 
$$\sigma_v(u)=\mathbf{r}(\mathbf{1},s_1s_2s_1) + \mathbf{r}(\mathbf{3},s_1s_2s_1)= \alpha_1 + s_1s_2(\alpha_1)=\alpha_1+ \alpha_2$$
\end{example}
\section{Main Theorem}
\label{main theorem}
This section proves the main theorem of the paper. First in the equivariant setting, and then for ordinary cohomology, we prove that the module map from $H^*(G/P)\otimes H^*(P/B)$ 
%tensor
to $H^*(G/B)$ induced by $p \otimes q \mapsto pq$ is a bilinear isomorphism of modules. We show this first in the equivariant case by proving that it takes a module basis for $H_T^*(G/P)\otimes H_T^*(P/B)$
%tensor 
to a module basis for $H_T^*(G/B)$.  The non-equivariant case follows from the equivariant case. 

\begin{theorem}
\label{thm: lin ind}
The set of Schubert class products $\{\sigma_v \sigma_w: v \in W^P, w \in W_P\}$ is a linearly independent set over $\mathbb{C}[\mathfrak{t}^*]$.
\end{theorem}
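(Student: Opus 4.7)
The plan is to show that the $|W| \times |W|$ matrix $M$ defined by $M_{u,(v,w)} = \sigma_v(u)\sigma_w(u)$ (rows indexed by $u \in W$, columns by pairs $(v,w) \in W^P \times W_P$) has nonzero determinant in $\mathbb{C}[\mathfrak{t}^*]$.  Since GKM theory embeds $H^*_T(G/B)$ as a $\mathbb{C}[\mathfrak{t}^*]$-submodule of $\bigoplus_u \mathbb{C}[\mathfrak{t}^*]$, nondegeneracy of $M$ is equivalent to the desired linear independence of $\{\sigma_v \sigma_w : v \in W^P,\ w \in W_P\}$.

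First I would establish block-triangularity.  Group rows by the parabolic factor $u^P$ (writing $u = u^P u_P$) and columns by $v \in W^P$, using a linear extension of the Bruhat order on $W^P$.  The standard parabolic lifting property---for $v \in W^P$ and $u \in W$, one has $v \leq u$ if and only if $v \leq u^P$---combined with Lemma \ref{prop:billey} forces $\sigma_v(u) = 0$ whenever $v \not\leq u^P$, so the block at $(v',v)$ vanishes whenever $v \not\leq v'$.  On the diagonal block indexed by $v$, uniqueness of the parabolic decomposition gives $(vw')^P = v$ for every $w' \in W_P$, so $\sigma_v(vw') = \sigma_v(v) \neq 0$ independent of $w'$.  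Invertibility of the diagonal block therefore reduces to invertibility of the $|W_P| \times |W_P|$ matrix $N$ with $N_{w',w} := \sigma_w(vw')$ for $w, w' \in W_P$.

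To analyze $N$, I would apply Billey's formula to $vw'$ using a reduced expression for $v$ concatenated with one for $w'$.  Any reduced subword representing $w \in W_P$ splits into a prefix in $v$'s factor (yielding some $w_a$) and a suffix in $w'$'s factor (yielding some $w_b$), with $w = w_a w_b$ a length-additive factorization.  Since reduced expressions of elements of $W_P$ use only simple reflections of $W_P$, both $w_a$ and $w_b$ lie in $W_P$, and the prefix-factoring built into Billey's formula yields
\[
\sigma_w(vw') = \sum_{\substack{w = w_a w_b,\ w_a, w_b \in W_P \\ \ell(w) = \ell(w_a) + \ell(w_b)}} \sigma_{w_a}(v) \cdot v\bigl(\sigma_{w_b}(w')\bigr),
\]
where $v$ acts on $\mathbb{C}[\mathfrak{t}^*]$ as an algebra automorphism.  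This factors $N = AB$, where $A_{w', w_b} = v(\sigma_{w_b}(w'))$ is the image under $v$ of the Schubert-evaluation matrix for $P/B$ (invertible, since that matrix is Bruhat-triangular with nonzero diagonal and $v$ is an automorphism), and $B_{w_b, w} = \sigma_{w w_b^{-1}}(v)$ is upper triangular with unit diagonal when $W_P$ is ordered by length.  Hence $\det N \neq 0$, which completes the argument.  I expect the main obstacle to be the derivation of the Billey decomposition above: the bookkeeping is delicate because $v$'s reduced expression may use simple reflections outside $W_P$, so one must verify that contributions from subwords whose $v$-part fails to land in $W_P$ correctly cancel or are absent.
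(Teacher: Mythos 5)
Your argument is essentially the paper's proof, up to transposition: the paper arranges the matrix with rows and columns both indexed by $W^P\times W_P$ (identifying the fixed point $u$ with its parabolic decomposition $v'w'$), establishes the same block-triangularity via the last-letter property of elements of $W^P$ (Proposition~\ref{prop last letter}, which is equivalent to the Bruhat lifting property you invoke), and factors each diagonal block as a product of an upper-triangular unit matrix and the $v$-twist of the Schubert-localization matrix for $P/B$, exactly as you do with $N=AB$. The obstacle you flag at the end is not actually an obstacle: since $w\in W_P$, \emph{every} reduced subword of $vw'$ representing $w$ uses only simple reflections from $W_P$ (no matter that $v$'s reduced word contains letters outside $W_P$), so the prefix $w_a$ and suffix $w_b$ automatically lie in $W_P$ and the decomposition $\sigma_w(vw')=\sum \sigma_{w_a}(v)\,v(\sigma_{w_b}(w'))$ holds with no cancellations to check.
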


In this section we will prove Theorem \ref{thm: lin ind} by arranging these products in the matrix 
$$A=(\sigma_v(v'w') \sigma_w(v'w'))_{(v,w), (v',w') \in W^P \times W_P}.$$  Our notational convention is to index rows by pairs $(v,w)\in W^P\times W_P$ and columns by pairs $(v',w')$ also in $W^P\times W_P$. \\
\\
We begin by establishing an order on $W^P \times W_P$.  The elements of both $W^P$ and $W_P$ are partially ordered by length; fix a total order on $W^P$ (respectively $W_P$) consistent with this partial order and extend this lexicographically to all of $W^P \times W_P$.   For instance all rows and columns corresponding to pairs in $(e,W_P)$ come before any pair in $(s_i,W_P)$.\\
\\
For the remainder of this section we will consider the matrix $A$ to have rows and columns ordered as above. The proof of Theorem \ref{thm: lin ind} is given in Section \ref{proof of thm:lin ind}.

\subsection{Key lemmas}
 
We begin with two lemmas. The first will prove that given the above ordering of its rows and columns, the matrix $A=(\sigma_v(v'w') \sigma_w(v'w'))$ is block upper-triangular. The second lemma will construct a matrix $M\cdot vN$ where $M$ is an invertible matrix and $vN$ is known to have linearly independent rows and columns.  The main theorem will then show how $A$ and $M \cdot vN$ are related.%We can then prove Theorem \ref{thm: lin ind} by showing that the diagonal blocks of $A$ are scalar multiples of the matrix $M\cdot wN$. 

\begin{lemma} \label{lemma: block-upper-triangular} 
The matrix $A=\left(\sigma_v(v'w')\sigma_w(v'w')\right)_{(v,w), (v',w') \in W^P \times W_P}$ is block upper-triangular.
\end{lemma}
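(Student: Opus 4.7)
The plan rests on the vanishing criterion in Lemma \ref{prop:billey}(\ref{prop: non-zero}): $\sigma_v(u) \neq 0$ iff $v \leq u$ in Bruhat order. To establish block upper-triangularity with blocks indexed by $W^P$, it suffices to show that whenever $v, v' \in W^P$ with $v > v'$ in the chosen total order on $W^P$ and $w, w' \in W_P$ are arbitrary, we have $\sigma_v(v'w') = 0$; this alone forces every entry $\sigma_v(v'w')\sigma_w(v'w')$ in the $(v,v')$ block to vanish.

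The key observation is that because $v' \in W^P$ and $w' \in W_P$, the lengths add: $\ell(v'w') = \ell(v') + \ell(w')$, so the product $v'w'$ is already its canonical $W^P$–$W_P$ factorization. I would then apply the standard parabolic lifting property for Bruhat order (cf.~Bj\"orner-Brenti \cite[Proposition 2.5.1]{Bjorner-Brenti}), which says that for $v \in W^P$ any Bruhat inequality $v \leq v'w'$ in $W$ implies $v \leq v'$ in Bruhat order on $W$ (equivalently on $W^P$, since the latter order is the restriction of the former). Because our total order on $W^P$ is length-compatible and Bruhat order itself refines length, $v \leq v'$ in Bruhat forces $v \leq v'$ in the total order. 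The contrapositive is exactly what we need: $v > v'$ in the total order implies $v \not\leq v'w'$ in Bruhat order, so $\sigma_v(v'w') = 0$ by Lemma \ref{prop:billey}(\ref{prop: non-zero}).

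The only real obstacle is a clean invocation of the parabolic lifting principle, which is purely combinatorial but must be stated in the right form. As a self-contained alternative, one can argue directly from Billey's formula \eqref{eq:Billeys} combined with Proposition \ref{prop last letter}: fix the reduced word for $v'w'$ obtained by concatenating reduced words for $v'$ and $w'$. Every reduced subword realizing $v \in W^P$ must end in a simple reflection $s_i \notin W_P$, yet every letter of the concatenation beyond the $v'$-part is a generator of $W_P$. Hence every reduced subword for $v$ lies entirely in the $v'$-segment, which yields $v \leq v'$ in Bruhat order on $W$. Either route produces the needed vanishing and closes out the lemma.
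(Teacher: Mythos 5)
Your proof is correct, and the lemma follows from either of the two routes you sketch. Your primary route invokes the parabolic projection fact that $u \leq w$ implies $u^P \leq w^P$ to conclude directly that $v \not\leq v'w'$ whenever $v \not\leq v'$, then applies the vanishing criterion in Lemma \ref{prop:billey}(\ref{prop: non-zero}). The paper takes a slightly different path: it works with Billey's formula for the concatenated reduced word of $v'w'$ and uses Proposition \ref{prop last letter} to show the stronger \emph{equality} $\sigma_v(v'w') = \sigma_v(v')$, from which vanishing follows by the same criterion; your ``self-contained alternative'' at the end is essentially this argument. The distinction is minor since both rest on the same ingredient (reduced words of $v \in W^P$ end outside $W_P$), but the paper's version is worth noting because the identity $\sigma_v(v'w') = \sigma_v(v')$ is reused in the proof of Theorem \ref{thm: lin ind} (to factor out $\sigma_v(v)$ from the diagonal block), whereas your lifting-property route proves only the vanishing needed for this lemma. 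One small point of emphasis: the paper actually establishes vanishing whenever $\ell(v) \geq \ell(v')$ and $v \neq v'$, which is slightly more than block upper-triangularity (it shows the length-diagonal superblocks are themselves block diagonal, as in the second example). Your argument gives this too — if $\ell(v) = \ell(v')$ and $v \neq v'$ then $v \not\leq v'$ in Bruhat order — but you only state the weaker conclusion.
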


\begin{proof}
Choose $v,v' \in W^P$.  Consider the entries of $A$ whose rows are indexed by pairs in $(v, W_P)$ and whose columns are indexed by pairs in $(v',W_P)$.  By construction this is a square $|W_P|\times |W_P|$ block.  Its entries are $(\sigma_v(v'w')\sigma_w(v'w'))$ where $w,w'$ range over all of $W_P$.  As established in Proposition \ref{prop last letter}, the last letter in every reduced word for $v' \in W^P$ is a simple reflection $s_i \not \in W_P$. Thus every reduced word for $v \in W^P$ inside $v'w$ is contained in the prefix $v'$.  Therefore $\sigma_v(v'w')=\sigma_v(v')$.  By Property \ref{prop: non-zero}  of Billey's formula $\sigma_v(v')$ is non-zero if and only if $v\leq v'$ in the Bruhat order.  Therefore whenever $\ell(v) \geq \ell(v')$ and $v \neq v'$ the entire block is zero. 
\end{proof}

\begin{example}{\rm
Consider the parabolic subgroup $W_P=\langle s_2 \rangle$ in the type $A_2$ Weyl group $ \langle s_1,s_2\rangle $. The minimal coset representatives are $ W^P=\{e,s_1, s_2s_1\}$.  Let $\sigma_{W_P}$ denote the collection $\{ \sigma_w: w\in W_P\}$.  Then the blocks of the matrix $A$ are
$$
\bordermatrix{~ & eW_P & s_1 W_P& s_2s_1 W_P \cr
\sigma_e\sigma_{W_P} & * & * &* \cr
\sigma_{s_1}\sigma_{W_P} &0 & * &* \cr
\sigma_{s_2s_1}\sigma_{W_P} & 0 & 0& * \cr}.
$$
}\end{example}

\begin{example}{\rm This example treats pairs $v,v' \in W^P$ with the same length.  Let $W_P$ be the parabolic subgroup $\langle s_3 \rangle \subset \langle s_1, s_2, s_3 \rangle$.  The elements of $W^P$ with length two are $s_1s_2, s_2s_1,$ and $s_3s_2$. The restriction of the matrix $A$ to the diagonal block where $v$ and $v'$ both have length two has form:
$$\bordermatrix{~ &s_1s_2W_P & s_2s_1 W_P & s_3s_2 W_P \cr
\sigma_{s_1s_2}\sigma_{W_P} & * &0&0 \cr
\sigma_{s_2s_1}\sigma_{W_P} & 0 &*&0 \cr
\sigma_{s_3s_2}\sigma_{W_P} & 0&0&* \cr}$$
}\end{example}

In the next lemma we show that the rows of the diagonal blocks of the matrix $A$ are linearly independent.  It is not immediately obvious that the matrices in this lemma are in fact the diagonal blocks; that result is part of the main theorem.

\begin{lemma}[Linear independence of diagonal blocks]\label{lemma: diagonal blocks}
Fix $v \in W^P$.  Assume that the elements of $W_P$ are ordered consistently with the partial order on length.  Let $M$ be the matrix defined by 
$$M_{wu}=\begin{cases} \sigma_{wu^{-1}}(v) &\text{if $u$ is a suffix of $w$} \\
0 & \text{otherwise}
\end{cases}
$$
where $w,u \in W_P$.  Define the matrix $N$  by $N_{u,w'}=\sigma_u(w')$ for $u,w' \in W_P$. Consider the algebra isomorphism $v: \mathbb{C}[\mathfrak{t}^*] \rightarrow \mathbb{C}[\mathfrak{t}^*]$ induced from the action $\alpha \mapsto v(\alpha)$.   Denote the image of $N$ under this action of $v$ by $vN$.\\
\\
Then the rows of the matrix $M \cdot vN$ are linearly independent over $\mathbb{C}[\mathfrak{t}^*]$.
\end{lemma}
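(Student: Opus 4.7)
The plan is to exhibit both $M$ and $vN$ as triangular matrices with nonzero diagonal entries (in opposite directions but with respect to the same total order on $W_P$), so that $\det(M \cdot vN) = \det(M) \det(vN)$ is a nonzero element of the integral domain $\mathbb{C}[\mathfrak{t}^*]$; linear independence of the rows over $\mathbb{C}[\mathfrak{t}^*]$ will follow immediately from the fact that a square matrix over an integral domain has linearly independent rows whenever its determinant is nonzero.

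Concretely, I would first fix a total order on $W_P$ that refines the Bruhat order (and hence is consistent with length). With respect to this order, $M$ is lower triangular with constant diagonal $1$. The key observation is combinatorial: if $u$ is a suffix of some reduced word for $w$, then $\ell(u) \leq \ell(w)$, with equality forcing $u = w$. So the condition $M_{wu} \neq 0$ puts the column index no later than the row index, and the diagonal entry is $M_{ww} = \sigma_{wu^{-1}}(v) = \sigma_e(v) = 1$ by Lemma \ref{prop:billey}(4).

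Next I would verify that $N$ is upper triangular with nonzero diagonal in the same order. By Lemma \ref{prop:billey}(\ref{prop: non-zero}), $N_{uw'} = \sigma_u(w')$ vanishes unless $u \leq w'$ in Bruhat order, which places the row index no later than the column index; the diagonal entry $N_{uu} = \sigma_u(u)$ is a product of positive roots via Billey's formula, hence nonzero in $\mathbb{C}[\mathfrak{t}^*]$. Because $v$ acts as a $\mathbb{C}$-algebra automorphism on $\mathbb{C}[\mathfrak{t}^*]$, applying $v$ entrywise preserves the zero pattern of $N$ and sends each nonzero diagonal entry $\sigma_u(u)$ to the nonzero polynomial $v(\sigma_u(u))$. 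Thus $vN$ is also upper triangular with nonzero diagonal, and the determinant satisfies
\[
\det(vN) \;=\; v\bigl(\det N\bigr) \;=\; v\!\left(\prod_{u \in W_P} \sigma_u(u)\right) \;\neq\; 0.
\]

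Putting these together, $\det(M \cdot vN) = \det(M)\,\det(vN) = 1 \cdot v\!\left(\prod_u \sigma_u(u)\right) \neq 0$, so the rows of $M \cdot vN$ are linearly independent over $\mathbb{C}[\mathfrak{t}^*]$. The only real obstacle is the careful verification that $M$ is triangular with unit diagonal, which hinges on the combinatorial fact that a suffix in a reduced word can never be longer than the word itself and must equal the word whenever their lengths agree. Once that is established, the determinant calculation and invocation of the domain property of $\mathbb{C}[\mathfrak{t}^*]$ are routine.
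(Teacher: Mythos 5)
Your proof is correct and takes essentially the same approach as the paper's: show $M$ is triangular with unit diagonal and that $vN$ has linearly independent rows, then conclude. The one place you diverge slightly is the treatment of $N$: the paper appeals directly to the fact that the rows of $N$ are the localizations of the Schubert class basis of $H^*_T(P/B)$ and hence linearly independent, whereas you give a self-contained triangularity argument via the vanishing condition $\sigma_u(w')\neq 0\Leftrightarrow u\leq w'$; the two are equivalent, with yours a bit more explicit. (Incidentally, your identification of $M$ as lower-triangular in the length-increasing order is correct; the paper's description of $M$ as upper-triangular appears to be a minor slip of convention that does not affect the argument.)
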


{\bf Note} that $v$ does not permute the rows or columns of $N$.  For computational clarity, the isomorphism $\alpha \mapsto v(\alpha)$ is equivalent to the map $t_{\alpha} \mapsto t_{v(\alpha)}$ if one passes to $\mathfrak{t}$.

\begin{proof}
If $\ell(u) > \ell(w)$ then by construction $M_{wu}=0$.  If $\ell(u)=\ell(w)$ then $M_{wu}=0$  unless $w=u$.  Therefore $M$ is an upper-triangular matrix.  The entries on the diagonal have the form $M_{ww}=\sigma_e(v)=1$. Since $1$ is a unit in $\mathbb{C}[\mathfrak{t}^*]$ the matrix $M$ is invertible.\\
\\
We defined $N=(\sigma_u(w'))_{u,w'\in W_P}$ to be the matrix of Schubert classes in $H^*_T(P/B)$. The rows of $N$ are the Schubert class basis for $H_T^*(P/B)$ so the rows and columns of the matrix $N$ are linearly independent. The function $v$ acts on $N$ by sending each $\alpha$ to ${v(\alpha)}$. This operation is invertible and so preserves linear independence of the matrix rows.  Thus the new matrix $vN$  also has linearly independent rows.\\
\\
Since $M$ is invertible over $\mathbb{C}[\mathfrak{t}^*]$ and $vN$ has linearly independent rows over $\mathbb{C}[\mathfrak{t}^*]$  the rows of the matrix product $M \cdot vN$ are also linearly independent over $\mathbb{C}[\mathfrak{t}^*]$.
\end{proof}

%\begin{example}
%Consider the example of $W_P=\langle s_3 \rangle \subset \langle s_1,s_2,s_3\rangle$. Then $$W^P=\{ e, s_1, s_2, s_1s_2, s_2s_1, s_3s_2, s_1s_2s_1, s_1s_3s_2, s_3s_2s_1, s_2s_1s_3s_2, s_1s_3s_2s_1, s_2s_1s_3s_2s_1 \}.$$
%The matrix $(\sigma_w(w'v')\sigma_v(w'v'))$ becomes\\
%\scalebox{.65}{$$
%\bordermatrix{~ & W_P &  s_1 W_P& s_2 W_P&s_1s_2 W_P&s_2s_1 W_P &s_3s_2 W_P &s_1s_2s_1W_P & s_1s_3s_2W_P & s_3s_2s_1W_P & s_2s_1s_3s_2W_P & s_1s_3s_2s_1W_P & s_2s_1s_3s_2s_1W_P \cr
 %\sigma_{e}\sigma_{W_P} &&&&&&&&&&&& \cr
 %\sigma_{s_1}\sigma_{W_P}&0&&&&&&&&&&& \cr
 %\sigma_{s_2} \sigma_{W_P}&0&&&&&&&&&&& \cr
 %\sigma_{s_1s_2}\sigma_{W_P}&0&0&0&&&&&&&&& \cr
 %\sigma_{s_2s_1} \sigma_{W_P}&0&0&0&&&&&&&&& \cr
 %\sigma_{s_3s_2} \sigma_{W_P}&0&0&0&&&&&&&&& \cr
 %\sigma_{s_1s_2s_1} \sigma_{W_P}&0&0&0&0&0&0&&&&&& \cr
 %\sigma_{s_1s_3s_2} \sigma_{W_P}&0&0&0&0&0&0&&&&&& \cr
 %\sigma_{s_3s_2s_1}\sigma_{W_P}&0&0&0&0&0&0&&&&&& \cr
 %\sigma_{s_2s_1s_3s_2} \sigma_{W_P}&0&0&0&0&0&0&0&0&0&&& \cr
 %\sigma_{s_1s_3s_2s_1} \sigma_{W_P}&0&0&0&0&0&0&0&0&0&&& \cr
 %\sigma_{s_2s_1s_3s_2s_1}\sigma_{W_P}&0&0&0&0&0&0&0&0&0&0&0& \cr}.
%$$}
%\end{example}

\subsection{Proof of Theorem \ref{thm: lin ind}}
\label{proof of thm:lin ind}
We now show that each of the diagonal blocks of $A$ identified in Lemma \ref{lemma: block-upper-triangular} is a scalar multiple of the matrix $M\cdot vN$ defined by Lemma \ref{lemma: diagonal blocks}. This proves that the rows of matrix $A$ are linearly independent and thus the collection of Schubert class products $\{\sigma_v\sigma_w: v\in W^P, w\in W_P\}$ is linearly independent over $\mathbb{C}[\mathfrak{t}^*]$.
\begin{proof}
Consider the matrix $A=(\sigma_v(v'w')\sigma_w(v'w'))_{(v,w), (v',w') \in W^P \times W_P}$  with rows and columns ordered lexicographically subordinate to the length partial order on $W^P$ and $W_P$ described above.  \\
\\
Partition the matrix $A$ into blocks according to the pairs $v, v' \in W^P$.  Lemma \ref{lemma: block-upper-triangular} proved that $A$ is block-upper-triangular with this partition.  Now consider the blocks along the diagonal, namely the blocks of the form
$$(\sigma_{v}(vw')\sigma_w(vw'))_{w,w'\in W_P}= \sigma_v(v)\cdot (\sigma_w(vw'))_{w,w'\in W_P}$$
for each $v \in W^P$.   Proposition \ref{prop: non-zero} of Billey's formula guarantees that $\sigma_v(v)$ is non-zero so it suffices to consider the matrix $(\sigma_w(vw'))_{w,w'\in W_P}$.  We will show that $$(\sigma_w(vw'))_{w,w'\in W_P}=M\cdot vN$$ where $M$ and $vN$ are the matrices of Lemma \ref{lemma: diagonal blocks}.  Multiplying matrices gives 
\begin{center} 
\begin{tikzpicture}
%\draw[help lines] (-3,-3) grid (10,3);
\node at (-4,0) {$M \cdot vN =$};
\node at (0,0) {$\begin{pmatrix} && \\ & {M_{wu}} & \\ &&  \end{pmatrix} $};
\draw [decorate,decoration={brace, amplitude=5pt}] (-1.5,-.8)--(-1.5,.8);
\draw [decorate,decoration={brace, amplitude=5pt}] (-.9,1.2)--(.9,1.2);
\node at (0,1.6) {$\scriptscriptstyle\text{$u$ ranges over $W_P$}$};
\node at (-2,0) {\rotatebox[origin=c]{90}{$\scriptscriptstyle\text{$w$ ranges over $W_P$}$}};
\draw [fill] (1.9,0) circle [radius=0.03];

\node at (5,0) {$\begin{pmatrix} && \\ & v(\sigma_{u}(w')) & \\ &&  \end{pmatrix} $};
\draw [decorate,decoration={brace, amplitude=5pt}] (3.5,-.8)--(3.5,.8);
\draw [decorate,decoration={brace, amplitude=5pt}] (3.95,1.2)--(6.05,1.2);
\node at (5,1.6) {$\scriptscriptstyle\text{$w'$ ranges over $W_P$}$};
\node at (3,0) {\rotatebox[origin=c]{90}{$\scriptscriptstyle\text{$u$ ranges over $W_P$}$}};

\begin{scope}[shift={(3,-3.5)}]
\node at (-4,0) {$=$};
\node at (0,0) {$\begin{pmatrix} && \\ &\sum \limits_{u\in W_P} M_{wu} \cdot v(\sigma_{u}(w')) & \\ &&  \end{pmatrix} $};
\draw [decorate,decoration={brace, amplitude=5pt}] (-3,-.8)--(-3,.8);
\draw [decorate,decoration={brace, amplitude=5pt}] (-2.3,1.2)--(2.3,1.2);
\node at (0,1.6) {$\scriptscriptstyle\text{$w'$ ranges over $W_P$}$};
\node at (-3.5,0) {\rotatebox[origin=c]{90}{$\scriptscriptstyle\text{$w$ ranges over $W_P$}$}};
\end{scope}
\end{tikzpicture}
\end{center}
We now show that for any $w,w'\in W_P$ the polynomial $\sigma_w(vw')$ can be decomposed as the sum $\sum \limits_{u\in W_P} M_{wu} \cdot v(\sigma_{u}(w')) $ .  Consider Billey's formula for $\sigma_w(vw')$ and group terms according to which part of $w$ is a subword of $v$ and which part is a subword of $w'$.  More precisely:
$$\sigma_w(vw')=\sum \limits_{\substack{u \text{ a suffix}\\ \text{of } w}} \overbrace{\sigma_{wu^{-1}}(v)}^\text{part of $w$ found in $v$} \cdot \underbrace{v \sigma_u(w')}_\text{part of $w$ found in $w'$}$$

By construction of $M$ this is $\sum \limits_{u\in W_P} M_{wu} \cdot v(\sigma_{u}(w'))$.
%$$ \sum \limits_{u\in W_P} M_{wu} \cdot v(\sigma_{u}(w')) =\sum \limits_{\substack{u \text{ a suffix}\\ \text{of } w}} \sigma_{wu^{-1}}(v) \cdot v \sigma_u(w')=\sigma_w(vw').$$
Therefore the matrix $(\sigma_w(vw'))_{w,w'\in W_P}$ is equal to $M \cdot vN$ as desired. \\
\\
By Lemmas \ref{lemma: block-upper-triangular}  and \ref{lemma: diagonal blocks} the rows of the matrix $A$ are linearly independent over $\mathbb{C}[\mathfrak{t}^*]$. Thus the Schubert class products $\{\sigma_v\sigma_w: v \in W^P, w \in W_P\}$ are linearly independent over $\mathbb{C}[\mathfrak{t}^*]$. \end{proof}

Using Theorem \ref{thm: lin ind} we can prove Theorem \ref{thm: main theorem} in the equivariant setting.
\begin{theorem}
\label{thm:gb=gppb}
The map $p \otimes q \mapsto pq$ induces a bilinear isomorphism of modules $$H_T^*(G/P)\otimes H_T^*(P/B) \cong H_T^*(G/B).$$%tensor
\end{theorem}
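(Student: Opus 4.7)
The plan is to combine Theorem \ref{thm: lin ind} with a graded rank count. The ring multiplication in $H_T^*(G/B)$ is $\mathbb{C}[\mathfrak{t}^*]$-bilinear, so after regarding $H_T^*(G/P)$ and $H_T^*(P/B)$ as submodules of $H_T^*(G/B)$ via the identifications of Section \ref{section: submodules of G/B}, the assignment $p \otimes q \mapsto pq$ descends to a well-defined $\mathbb{C}[\mathfrak{t}^*]$-linear map
\[ \phi \colon H_T^*(G/P) \otimes_{\mathbb{C}[\mathfrak{t}^*]} H_T^*(P/B) \longrightarrow H_T^*(G/B). \]
The source is a free $\mathbb{C}[\mathfrak{t}^*]$-module with basis of pure tensors $\{\sigma_v \otimes \sigma_w : v \in W^P,\ w \in W_P\}$, and $\phi$ sends these to the Schubert products $\sigma_v \sigma_w$. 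Theorem \ref{thm: lin ind} asserts precisely that these images are linearly independent over $\mathbb{C}[\mathfrak{t}^*]$, so $\phi$ is injective.

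To upgrade injectivity to an isomorphism I would match graded ranks. Both sides are free graded $\mathbb{C}[\mathfrak{t}^*]$-modules: the tensor $\sigma_v \otimes \sigma_w$ lies in degree $\ell(v) + \ell(w)$ on the left, while the Schubert class $\sigma_u$ lies in degree $\ell(u)$ on the right. The standard length-additive decomposition $W = W^P \cdot W_P$, with $\ell(vw) = \ell(v) + \ell(w)$ whenever $v \in W^P$ and $w \in W_P$, provides a degree-preserving bijection between the two indexing sets. Consequently the graded Hilbert series of source and target agree as $\mathbb{C}$-vector spaces. Restricting $\phi$ to any fixed graded piece then yields a $\mathbb{C}$-linear injection between finite-dimensional $\mathbb{C}$-vector spaces of equal dimension, hence a bijection; summing over all degrees produces surjectivity. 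Equivalently, the graded cokernel has trivial Hilbert series and must therefore vanish, so $\phi$ is an isomorphism.

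The main obstacle I anticipate is not a new calculation but the bookkeeping around the three gradings. One must verify that the inclusions from Section \ref{section: submodules of G/B} are degree-preserving (so that a pure tensor lands in the expected graded component), that $\phi$ itself preserves total degree, and that the length-additive factorization $W = W^P \cdot W_P$ holds precisely for the minimal-length coset representatives of Proposition \ref{prop last letter}. Once these are in place the isomorphism follows immediately, since the heavy lifting---the injectivity via the block-triangular matrix $A$ and the diagonal-block factorization $M \cdot vN$---has already been completed in the proof of Theorem \ref{thm: lin ind}.
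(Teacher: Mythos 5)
Your proposal is correct and takes essentially the same approach as the paper: both invoke Theorem~\ref{thm: lin ind} for linear independence and use the length-additive bijection $W^P \times W_P \to W$, $(v,w)\mapsto vw$, to match degrees and conclude that the products $\sigma_v\sigma_w$ form a $\mathbb{C}[\mathfrak{t}^*]$-basis. Your Hilbert-series phrasing is simply a more explicit rendering of the paper's observation that the set has ``the correct number of elements of each polynomial degree to be a basis.''
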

\begin{proof}
 For any pair $(v,w)\in W^P\times W_P$ the polynomial degree of the homogeneous class $\sigma_v \sigma_w$ is  $\ell(v)+\ell(w)$ just like that of $\sigma_{vw}$. The map $W^P \times W_P \rightarrow W$ given by $(v,w) \mapsto vw$ is a bijection \cite{Bjorner-Brenti} and induces a bijection $\sigma_v \sigma_w\mapsto \sigma_{vw}$ which preserves polynomial degree.  Thus the set $\{\sigma_v\sigma_w: v \in W^P, w \in W_P\}$ contains the correct number of elements of each polynomial degree to be a basis of $H_T^*(G/B)$.\\
\\
By Theorem \ref{thm: lin ind} the set $\{\sigma_v\sigma_w: v \in W^P, w \in W_P\}$ is also linearly independent over $H_T^*(pt)$.  Thus it is a basis  for $H^*_T(G/B)$.
\end{proof}

The equivariant isomorphism induces a similar isomorphism in ordinary cohomology, essentially by Koszul duality.   We confirm this below, re-deriving the Leray-Hirsch isomorphism.  (Note that not all equivariant results immediately descend to ordinary cohomology; see for instance Theorem \ref{thm: distinct bases}.)

\begin{corollary}
\label{cor: ordinary cohomology}
The map $p \otimes q \mapsto pq$ induces a bilinear isomorphism of modules $$H^*(G/P)\otimes H^*(P/B) \cong H^*(G/B).$$
\end{corollary}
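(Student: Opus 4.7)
The plan is to obtain the ordinary cohomology statement from Theorem \ref{thm:gb=gppb} by applying the base-change functor $- \otimes_{\mathbb{C}[\mathfrak{t}^*]} \mathbb{C}$ to both sides, where $\mathbb{C}$ is the quotient of $\mathbb{C}[\mathfrak{t}^*]$ by its ideal of positive-degree elements. The key input is that each of $G/B$, $G/P$, and $P/B$ is equivariantly formal: since its ordinary cohomology is concentrated in even degrees (every Schubert cell has even real dimension), the natural projection $H^*_T(X) \twoheadrightarrow H^*(X)$ identifies $H^*(X)$ with $H^*_T(X) \otimes_{\mathbb{C}[\mathfrak{t}^*]} \mathbb{C}$, and each equivariant Schubert class $\sigma_u$ descends to the corresponding ordinary Schubert class $\bar\sigma_u$, which together form a $\mathbb{C}$-basis.

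Applying this base change to the equivariant isomorphism of Theorem \ref{thm:gb=gppb} yields $H^*(G/B)$ on the right. On the left, since $H^*_T(G/P)$ and $H^*_T(P/B)$ are free $\mathbb{C}[\mathfrak{t}^*]$-modules with Schubert bases, killing the $\mathbb{C}[\mathfrak{t}^*]$-action collapses the triple tensor, producing
\[H^*_T(G/P) \otimes_{\mathbb{C}[\mathfrak{t}^*]} H^*_T(P/B) \otimes_{\mathbb{C}[\mathfrak{t}^*]} \mathbb{C} \;\cong\; H^*(G/P) \otimes_{\mathbb{C}} H^*(P/B).\]
The induced map remains $\bar p \otimes \bar q \mapsto \bar p\,\bar q$, because the equivariant-to-ordinary projection is a ring homomorphism and so carries products in $H^*_T(G/B)$ to the corresponding products in $H^*(G/B)$.

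To confirm bijectivity I would invoke the general fact that if $M$ is a free $R$-module with basis $\{e_i\}$ and $R \to R/I$ is a ring quotient, then $M \otimes_R (R/I)$ is a free $R/I$-module with basis $\{e_i \otimes 1\}$. Applied to the equivariant basis $\{\sigma_v \sigma_w : v \in W^P, w \in W_P\}$ of $H^*_T(G/B)$ provided by Theorem \ref{thm:gb=gppb}, this produces the $\mathbb{C}$-basis $\{\bar\sigma_v \bar\sigma_w\}$ of $H^*(G/B)$, which coincides with the image of the tensor basis $\{\bar\sigma_v \otimes \bar\sigma_w\}$ on the left-hand side. The only real obstacle is careful bookkeeping of the base change: one has to track that quotienting either tensor factor by the $\mathbb{C}[\mathfrak{t}^*]$-action leaves a tensor product over $\mathbb{C}$ with no remnant of the equivariant structure, so that the induced map genuinely lands in ordinary cohomology rather than some intermediate quotient.
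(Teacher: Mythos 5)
Your proof is correct and follows essentially the same route as the paper's. Both arguments reduce to the observation that base-changing along $\mathbb{C}[\mathfrak{t}^*] \to \mathbb{C}$ (equivalently, quotienting by the augmentation ideal $M$, which is how the paper phrases it) carries the equivariant isomorphism of Theorem \ref{thm:gb=gppb} to the ordinary one, using equivariant formality to identify $H^*_T(X)/MH^*_T(X)$ with $H^*(X)$ and the fact that the tensor product over $\mathbb{C}[\mathfrak{t}^*]$ collapses to a tensor product over $\mathbb{C}$ after the quotient.
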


\begin{proof}
Let $M \subseteq \mathbb{C}[t^*]$ be the augmentation ideal, namely $M = \langle \alpha_1, \alpha_2, \ldots, \alpha_n\rangle$.  Recall that the ordinary cohomology is the quotient $H^*(X) \cong \frac{H^*_T(X)}{MH^*_T(X)}$ when $X$ satisfies certain conditions, for instance, if $X$ has no odd-dimensional ordinary cohomology \cite{GKMpaper}.  Consider the two projections 
\[H^*_T(G/P) \otimes_{\mathbb{C}[\mathfrak{t}^*]} H^*_T(P/B) \rightarrow \frac{H^*_T(G/P) \otimes_{\mathbb{C}[\mathfrak{t}^*]} H^*_T(P/B)}{M \left( H^*_T(G/P) \otimes_{\mathbb{C}[\mathfrak{t}^*]} H^*_T(P/B) \right)}\]
and 
\[H^*_T(G/P) \otimes_{\mathbb{C}[\mathfrak{t}^*]} H^*_T(P/B) \rightarrow \frac{H^*_T(G/P)}{M H^*_T(G/P)} \otimes_{\mathbb{C}[\mathfrak{t}^*]} \frac{H^*_T(P/B)}{M H^*_T(P/B)}.\]
If $a \otimes b \in H^*_T(G/P) \otimes_{\mathbb{C}[\mathfrak{t}^*]} H^*_T(P/B)$ and $m \in M$ then $m(a \otimes b) = (ma) \otimes b = a \otimes (mb)$ so the kernels of the two projections agree.  It follows that we have an isomorphism
\[\frac{H^*_T(G/P) \otimes_{\mathbb{C}[\mathfrak{t}^*]} H^*_T(P/B)}{M \left( H^*_T(G/P) \otimes_{\mathbb{C}[\mathfrak{t}^*]} H^*_T(P/B) \right)} \cong \frac{H^*_T(G/P)}{M H^*_T(G/P)} \otimes_{\mathbb{C}[\mathfrak{t}^*]} \frac{H^*_T(P/B}{M H^*_T(P/B)}.\]
The map $\phi: H^*_T(G/P) \otimes_{\mathbb{C}[\mathfrak{t}^*]} H^*_T(P/B) \rightarrow H^*_T(G/B)$ is an isomorphism of $\mathbb{C}[\mathfrak{t}^*]$-modules so it commutes with taking the quotient by the augmentation $MH^*_T(G/B)$.  Combining these results gives
\[\frac{H^*_T(G/P)}{M H^*_T(G/P)} \otimes_{\mathbb{C}[\mathfrak{t}^*]} \frac{H^*_T(P/B)}{M H^*_T(P/B)} \cong  \frac{H^*_T(G/B}{M H^*_T(G/B)}\]
or in other words $H^*(G/P) \otimes H^*(P/B) \cong H^*(G/B)$.
\end{proof}

\section{Applications}
\label{applications}

The parabolic basis $\mathcal{B}_P=\{\sigma_v\sigma_w : v\in W^P, w\in W_P\}$ is generally not the Schubert basis. In fact we will show that, with the exception of $P=G$ and $P=B$, each of the bases $\mathcal{B}_P$ is distinct not only from the Schubert basis but from any other parabolic basis as well.  As with different bases of symmetric functions, this is a useful computational tool. As another application we compute the character of a particular Springer representation.
\subsection{The parabolic basis $\mathcal{B}_P$}
We begin with an example illustrating that the basis $\mathcal{B}_P$ is not the Schubert basis.
\def\GKMgraph{\draw(A)--(B)--(C)--(D)--(E)--(F)-- cycle; 
\draw (A)--(D); \draw(B)--(E);
 \draw (F)--(C);}
\newcommand{\Schubertclass}[7]{%
    \coordinate (A) at (0,0);
    \coordinate (B) at (-1,.8);
    \coordinate (C) at (-1,2.2);
    \coordinate (D) at (0,3);
    \coordinate (E) at (1,2.2);
    \coordinate (F) at (1,.8);
    \coordinate (G) at (0,4);
    \GKMgraph
      \node [below] at (A) {${#1}$};
      \node [left] at (B) {${#2}$};
      \node [left] at (C) {${#3}$};
      \node [above] at (D) {${#4}$};
      \node [right] at (E) {${#5}$};
      \node [right] at (F) {${#6}$};
      \node at (G) {${#7}$};
   }

\begin{example}
\label{ex: a2}
%This example uses $H^*_T(GL_3/B)$ to demonstrate that the basis $\{\sigma_w \sigma_v\}$ is not the Schubert basis.  
We again use the $A_2$ example $W_P=\langle s_2 \rangle$ and $ W^P=\{e,s_1, s_2s_1\}$.  Four of the classes in $\mathcal{B}_P$ are also Schubert classes:
\begin{center}
\scalebox{.8}{
\begin{tikzpicture}
\matrix[column sep=0.8cm,row sep=0.5cm, ampersand replacement=\&]
{
\Schubertclass{1}{1}{1}{1}{1}{1}{\sigma_e\sigma_e=\sigma_e} \&
\Schubertclass{0}{0}{\alpha_1+\alpha_2}{\alpha_1+\alpha_2}{\alpha_2}{\alpha_2}{\sigma_e\sigma_{s_2}=\sigma_{s_2}} \&
\Schubertclass{0}{\alpha_1}{\alpha_1}{\alpha_1+\alpha_2}{\alpha_1+\alpha_2}{0}{\sigma_{s_1}\sigma_e=\sigma_{s_1}} \&
\Schubertclass{0}{0}{0}{\alpha_2(\alpha_1+\alpha_2)}{\alpha_2(\alpha_1+\alpha_2)}{0}{\sigma_{s_2s_1}\sigma_e=\sigma_{s_2s_1}}\\
};
\end{tikzpicture}
}\end{center}

The remaining two classes are not Schubert classes.

\scalebox{.8}{
\begin{tabular}{m{8cm} m{2cm} m{8cm}}
\begin{tikzpicture}
\Schubertclass{0}{0}{\alpha_1(\alpha_1+\alpha_2)}{(\alpha_1+\alpha_2)^2}{\alpha_2(\alpha_1+\alpha_2)}{0}{\sigma_{s_1}\sigma_{s_2}} 
\end{tikzpicture}
&
 $\neq$ &
\begin{tikzpicture}

 \Schubertclass{0}{0}{\alpha_1(\alpha_1+\alpha_2)}{\alpha_1(\alpha_1+\alpha_2)}{0}{0}{\sigma_{s_1s_2}};
\end{tikzpicture}
\end{tabular}}\\
\\
\scalebox{.8}{
\begin{tabular}{m{8cm} m{2cm} m{8cm}}
\begin{tikzpicture}
\Schubertclass{0}{0}{ {\color{white}\alpha_1\alpha(\alpha_1+\alpha_2)} 0}{\alpha_2(\alpha_1+\alpha_2)^2}{(\alpha_2)^2(\alpha_1+\alpha_2)}{0}{\sigma_{s_2s_1}\sigma_{s_2}} 
\end{tikzpicture}
&
 $\neq$ &
\begin{tikzpicture}
 \Schubertclass{0}{0}{{\color{white}\alpha_1\alpha(\alpha_1+\alpha_2)}0 }{\alpha_1\alpha_2(\alpha_1+\alpha_2)}{0}{0}{\sigma_{s_2s_1s_2}};
\end{tikzpicture}
\end{tabular}}\\
The class $\sigma_{s_1}\sigma_{s_2}$ is equal to $\sigma_{s_1s_2}+\sigma_{s_2s_1}$ and the class $\sigma_{s_2s_1}\sigma_{s_1}$ is equal to $\sigma_{s_1s_2s_1}+\alpha_2 \sigma_{s_2s_1}$.
\end{example}

A Schubert class $\sigma_w$ could appear in one of these parabolic bases even if the word $w$ is contained in neither the parabolic subgroup nor the set of minimal coset representatives.  If the class $\sigma_w$ does appear in the basis, we know exactly which Schubert classes are multiplied together to obtain it.

\begin{lemma}
\label{lemma: one class to two}  Fix a parabolic $P$ and suppose that $v\in W^P, w\in W_P$. 
If the product class $\sigma_v\sigma_w$ is equal to a single Schubert class $\sigma_u$ then $u=vw$ is the parabolic decomposition of $u$.
\end{lemma}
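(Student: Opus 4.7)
The plan is to evaluate the equation $\sigma_v\sigma_w=\sigma_u$ at carefully chosen torus-fixed points and exploit the characterization of nonvanishing in Billey's formula (Property \ref{prop: non-zero}).

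First, comparing polynomial degrees via Lemma \ref{prop:billey}(2): the product $\sigma_v\sigma_w$ is homogeneous of degree $\ell(v)+\ell(w)$ while $\sigma_u$ is homogeneous of degree $\ell(u)$. Hence
\[
\ell(u)=\ell(v)+\ell(w).
\]

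Next, I would evaluate at the fixed point $vw\in W$. The key point, already exploited in the proof of Lemma \ref{lemma: block-upper-triangular}, is that Proposition \ref{prop last letter} forces every reduced word for $v\in W^P$ to end with a simple reflection outside $W_P$, while every reduced word for $w\in W_P$ uses only simple reflections in $W_P$. Concatenating such reduced words produces a reduced expression for $vw$ of length $\ell(v)+\ell(w)$; in particular $v$ occurs as a reduced subword (the prefix) and $w$ occurs as a reduced subword (the suffix). By Billey's formula this makes both $\sigma_v(vw)$ and $\sigma_w(vw)$ nonzero. Consequently
\[
\sigma_u(vw)=\sigma_v(vw)\,\sigma_w(vw)\neq 0,
\]
and Property \ref{prop: non-zero} of Billey's formula gives $u\leq vw$ in Bruhat order.

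Combining $u\leq vw$ with the length identity $\ell(u)=\ell(v)+\ell(w)=\ell(vw)$ forces $u=vw$. Finally, since the bijection $W^P\times W_P\to W$ used in the proof of Theorem \ref{thm:gb=gppb} is precisely the parabolic decomposition map, the factorization $u=vw$ with $v\in W^P$ and $w\in W_P$ is the parabolic decomposition of $u$.

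There is no real obstacle; the argument is essentially one application of Billey's formula at the point $vw$. The only subtlety to verify carefully is that the concatenation of reduced words for $v$ and $w$ is genuinely reduced in $W$ (so that $v$ and $w$ actually appear as reduced subwords of $vw$), which is exactly the content of Proposition \ref{prop last letter} and which has already been used in Section \ref{proof of thm:lin ind}.
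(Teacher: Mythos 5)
Your proof is correct and follows essentially the same approach as the paper: compare degrees to get $\ell(u)=\ell(v)+\ell(w)$, use the length-additivity of the parabolic decomposition together with Property \ref{prop: non-zero} of Billey's formula to show $\sigma_u(vw)\neq 0$, hence $u\leq vw$, and conclude $u=vw$ from the equality of lengths. The paper's proof contains an additional (unused) remark about points $u'\not\geq u$, but the substance of the argument is the same as yours.
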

This lemma is a consequence of a result of Reiner, Woo, and Yong \cite[Lemma 2.2]{ReinerWooYong}. We give a different proof.
\begin{proof}
If $\sigma_v\sigma_w=\sigma_u$ then $\ell(v)+\ell(w)=\ell(u)$ since both sides must have the same polynomial degree. For any $u'\not \geq u$ at least one of $\sigma_v(u'), \sigma_w(u')$ must be zero since the product $\sigma_v(u')\sigma_w(u')$ is zero. By construction $\ell(v)+\ell(w)=\ell(vw)$ and by Property \ref{prop: non-zero} of Billey's formula $\sigma_v(vw)$ and $\sigma_w(vw)$ are both nonzero. Thus $\sigma_u(vw)$ is nonzero, implying that $vw \geq u$. But $vw$ has the same length as $u$ so the two words must be equal.
\end{proof}

Both $\mathcal{B}_G$ and $\mathcal{B}_B$ are the classical Schubert basis.  Since $W_G=W^B=W$ every class in $\mathcal{B}_G$ has the form $\sigma_e\sigma_w=\sigma_w$ and every class in $\mathcal{B}_B$ has the form $\sigma_v\sigma_e=\sigma_v$.  However all other parabolic bases are distinct.  For the sake of clarity we prove the result first in the case when $G$ is connected and then for general reductive linear algebraic groups $G$.

\begin{theorem}
\label{thm: distinct bases}
Assume that the Dynkin diagram for $W$ is connected.  With the exception of $P=G$ and $Q=B$, distinct parabolics $P$ and $Q$ have distinct bases $\mathcal{B}_P$ and $\mathcal{B}_Q$ for $H_T^*(G/B)$.
\end{theorem}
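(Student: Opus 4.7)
The plan is to use the unique top-degree element of each basis to recover the parabolic. Because $(v, w) \mapsto vw$ is a length-preserving bijection $W^P \times W_P \to W$, the only pair with $\ell(v) + \ell(w) = \ell(w_0)$ is the parabolic decomposition $w_0 = w_0^P w_P$. Hence each basis $\mathcal{B}_P$ contains a unique element of top polynomial degree $\ell(w_0)$, namely $X_P := \sigma_{w_0^P}\sigma_{w_P}$. If $\mathcal{B}_P = \mathcal{B}_Q$ then $X_P = X_Q$.

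First I would dispose of the cases where one of $P, Q$ equals $G$ or $B$. When $P \in \{G, B\}$ the basis $\mathcal{B}_P$ is the Schubert basis and $X_P = \sigma_{w_0}$. For any non-trivial $Q$, connectedness of the Dynkin diagram together with nonemptiness of both $S_Q$ and $S \setminus S_Q$ yields adjacent simple reflections $s_j \in S \setminus S_Q$ and $s_i \in S_Q$; equivariant Monk's formula then gives $\sigma_{s_j}\sigma_{s_i} = \sigma_{s_j s_i} + \sigma_{s_i s_j}$, a sum of two distinct length-two Schubert classes. This element of $\mathcal{B}_Q$ is not a Schubert class, so $\mathcal{B}_Q \neq \mathcal{B}_G = \mathcal{B}_B$.

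For the main case, both $P, Q$ non-trivial with $P \neq Q$: suppose $\mathcal{B}_P = \mathcal{B}_Q$ so $X_P = X_Q$. I would evaluate both sides at the sub-top fixed points $w_0 s_i$ via GKM localization. By Property 3 of Billey's formula, $X_P(w_0 s_i) = \sigma_{w_0^P}(w_0 s_i)\,\sigma_{w_P}(w_0 s_i)$ is nonzero precisely when both $w_0^P \leq w_0 s_i$ and $w_P \leq w_0 s_i$ in Bruhat order. The key claim is that $\{s_i \in S : X_P(w_0 s_i) \neq 0\} = S_P$, which recovers $S_P$ from $X_P$; combined with $X_P = X_Q$ this forces $S_P = S_Q$, hence $P = Q$, a contradiction.

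To prove the claim, when $s_i \in S_P$: $s_i$ is a right descent of $w_P$, so $w_0 s_i = w_0^P \cdot (w_P s_i)$ is a reduced parabolic decomposition, giving $w_0^P \leq w_0 s_i$; a subword argument using that the support of $w_0^P$ equals $S$ (for proper $P$ under the connected Dynkin assumption) then yields $w_P \leq w_0 s_i$. When $s_i \notin S_P$: use the coset projection $\pi_P : W \to W^P$ sending $w$ to the minimum-length element of $wW_P$, together with the standard lemma that for $v \in W^P$ and $w \in W$, $v \leq w$ iff $v \leq \pi_P(w)$. Since $s_i \notin W_P$, the cosets of $w_0$ and $w_0 s_i$ differ, so $\pi_P(w_0 s_i) \neq w_0^P$ in $W^P$; as $w_0^P$ is the unique longest element of $W^P$, we have $\ell(\pi_P(w_0 s_i)) < \ell(w_0^P)$, forcing $w_0^P \not\leq w_0 s_i$. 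The main obstacle is the subword analysis establishing $w_P \leq w_0 s_i$ when $s_i \in S_P$ together with the support claim for $w_0^P$; both rely on the connected Dynkin assumption.
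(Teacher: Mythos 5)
Your strategy is sound and genuinely different from the paper's. The paper picks a shortest path in the Dynkin diagram from $W_Q$ to $W_P\setminus W_Q$, builds a Weyl group element with a unique reduced word from that path, and then runs a three-way case analysis comparing explicit localizations of specific Schubert products. You instead read $S_P$ off the unique top-degree element $X_P=\sigma_{w_0^P}\sigma_{w_P}$ of $\mathcal{B}_P$ by testing which localizations $X_P(w_0s_i)$ vanish; this is more conceptual, uniform (no case analysis by parabolics), and the boundary cases $P\in\{B,G\}$ are handled cleanly via the equivariant Chevalley/Monk rule (note the extra equivariant term $\sigma_{s_j}(s_i)\sigma_{s_i}$ vanishes since $i\neq j$, as you implicitly use). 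Your treatment of $s_i\notin S_P$ via the projection $\pi_P$ is correct.

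Two points still need to be supplied, both of which you acknowledge. The essential one is the support lemma: for $W$ irreducible and $P\subsetneq G$ proper, $\operatorname{supp}(w_0^P)=S$. This is true but is a real lemma; one proof: $N(w_0^P)=\Phi^+\setminus\Phi_P^+$, so if $s_i\notin\operatorname{supp}(w_0^P)$ then $w_0^P\in W_{S\setminus\{s_i\}}$ forces $\Phi^+\setminus\Phi_P^+\subseteq\Phi_{S\setminus\{s_i\}}$; but choosing $s_j\notin S_P$ and summing roots along a Dynkin path from $i$ to $j$ produces $\beta\in\Phi^+$ with $\alpha_i,\alpha_j\in\operatorname{supp}(\beta)$, and such $\beta$ lies in $\Phi^+\setminus\Phi_P^+$ yet not in $\Phi_{S\setminus\{s_i\}}$, a contradiction. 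Second, for the implication $w_P\le w_0s_i$ when $s_i\in S_P$, a direct subword hunt is awkward ($s_i$ is not a right descent of $w_0s_i$, so you cannot simply extend the suffix $w_Ps_i$); the clean route is the Bruhat antiautomorphism $w\mapsto w_0w$, which gives $w_P\le w_0s_i\iff s_i\le w_0^P$, i.e.\ exactly the support lemma. With these supplied, your argument is complete and arguably more illuminating than the paper's, since it identifies a canonical invariant ($X_P$, evaluated at coatoms) that recovers $P$ from $\mathcal{B}_P$.
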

%{\color{blue}
%\begin{remark}
%If any connected component of the Dynkin diagram contains reflections in both $W_P$ and $W_Q$, then the following proof holds.  However if $W_P$ is one entire connected component of the diagram the $\mathcal{B}_P$ will be equal to the Schubert basis.
%\end{remark}}
\begin{proof}

Neither $P$ nor $Q$ is $B$ so there is at least one simple reflection in each of $W_P$ and $W_Q$.  We also assume that $P \neq Q$ so at least one of $W_P$ and $W_Q$ contains a simple reflection that the other does not.  Without loss of generality assume that there is at least one simple reflection in $W_P\setminus W_Q$.  Consider all paths in the Dynkin diagram between simple roots corresponding to reflections in $W_Q$ and simple roots corresponding to reflections in $W_P \setminus W_Q$.  Choose a minimal-length such path and denote the endpoints by $\alpha_i$ and $\alpha_j$ where $\alpha_i$ corresponds to $s_i \in W_P\setminus W_Q$ and $\alpha_j$ corresponds to $s_j\in W_Q$.   Let $v_R$ be the word $s_j v s_i$ corresponding to that path. The path is minimal so the word $v$ contains no reflections in $W_P$ or $W_Q$.

Since $v_R\in W$ is a word whose letters $s_{i_1} s_{i_2} ... s_{i_k}$ are the simple reflections in order corresponding to the vertices of a path in the Dynkin diagram, the letters $s_{i_j}$ and $s_{i_{j+1}}$ do not commute.  Each reflection occurs at most once so no braid moves can be performed on $v_R$.  Thus $v_R$ has no other factorization into simple reflections in $W$. 

Consider the factorization of $v_R$ in each of $W^PW_P$ and $W^QW_Q$.  Since $v_R$ has a unique minimal word this factorization simply splits $v_R$ into a prefix and a suffix, with the prefix ending in the rightmost occurrence of $s_k \not \in W_P$  respectively $W_Q$.  In particular the reflection $s_i$ is not in $W_Q$ so $v_R \in W^Q$.  If $v \neq e$ then similarly $s_jv \in W^P$ and $s_i \in W_P$.

We will show that either the Schubert class corresponding to $v_R$ is in $\mathcal{B}_Q$ or the Schubert class corresponding to $s_is_j$ is in $\mathcal{B}_P$.  The two bases $\mathcal{B}_P$ and $\mathcal{B}_Q$ are equal only if $\sigma_{v_R}$ appears in $\mathcal{B}_P$ or $\sigma_{s_is_j}$ appears in $\mathcal{B}_Q$ respectively.  Lemma \ref{lemma: one class to two} showed that the only way that $\sigma_a\sigma_b \in \mathcal{B}_P$ could equal $\sigma_{v_R}$ is if $ab=v_R$ and similarly for $\sigma_{s_is_j}$.  We will then evaluate at particular Weyl group elements to prove that the  classes cannot be equal.  

The cases we consider are:
\begin{enumerate}
\item[Case 1.] If $s_j\not \in W_P$ then $\sigma_{v_R}$ is in $\mathcal{B}_Q$ and $\sigma_{s_jv} \sigma_{s_i} \in \mathcal{B}_P$.
\item[Case 2.] If $s_j\in W_P$ then
\begin{enumerate}
\item[a] if $v_R=s_jvs_i$ for $v\neq e$ then $\sigma_{v_R}$ is in $\mathcal{B}_Q$ and $\sigma_{s_jv} \sigma_{s_i} \in \mathcal{B}_P$.   
\item[b] if $v_R=s_js_i$ then $\sigma_{s_is_j}$ is in $\mathcal{B}_P$ and $\sigma_{s_i}\sigma_{s_j} \in \mathcal{B}_Q$.
\end{enumerate}
\end{enumerate}

{\bf Case 1.}  To see that $\sigma_{v_R} \neq \sigma_{s_jv}\sigma_{s_i}$ we compare their values at $s_jvs_is_jv$.   We could prove that $s_jvs_is_jv$ is reduced by an argument involving relations like the one with which we proved $v_R$ has a unique reduced word; alternatively we could observe that $s_jvs_is_jv$ is reduced because it is in Bj\"{o}rner-Brenti's {\em normal form} \cite[Proposition 3.4.2]{Bjorner-Brenti} (with roots ordered in the same order as the path from $\alpha_i$ to $\alpha_j$). On the one hand 
\[\sigma_{v_R}(s_jvs_is_jv) = \sigma_{s_jv}(s_jv) s_jv (\alpha_i) = \sigma_{v_R}(v_R).\]  
On the other hand 
\[\sigma_{s_jv}(s_jvs_is_jv)\cdot \sigma_{s_i}(s_jvs_is_jv) = \left(\sigma_{s_jv}(s_jv)+s_jvs_i\sigma_{s_jv}(s_jv)+ \text{other non-negative terms} \right) \cdot s_jv (\alpha_i)\]
which is 
%$\sigma_{s_jv}(s_jv)\cdot s_jv(\alpha_i) + \textup{ something positive}=
$\sigma_{v_R}(v_R) + \textup{something positive}$.  This proves the claim in this case.

{\bf Case 2a.}  In this case we evaluate the classes at $s_is_jvs_i$ which is reduced by the previous argument.  In $\mathcal{B}_Q$ we have $\sigma_{v_R}(s_is_jvs_i)= s_i(\sigma_{v_R}(v_R))$ while in  $\mathcal{B}_P$ we have
$$
\sigma_{s_jv}(s_is_jvs_i)\cdot \sigma_{s_i}(s_is_jvs_i)=s_i(\sigma_{s_jv}(v_R))\cdot (\alpha_i+s_is_jv(\alpha_i)).
$$
Again this equals $s_i(\sigma_{v_R}(v_R))+\text{ something positive}$.  The claim holds in this case, too.

{\bf Case 2b.} We look at the classes corresponding to $s_is_j$.  The word $s_is_j$ is contained in $W_P$ and decomposes into $s_i\in W^Q$ and $s_j\in W_Q$.  Evaluating at the reduced word $s_js_is_j$ gives  
$$\sigma_{s_i}(s_js_is_j)\cdot \sigma_{s_j}(s_js_is_j)=s_j(\alpha_i)\cdot (\alpha_j+s_js_i(\alpha_j))$$
$$\textup{but       } \hspace{1em} \sigma_{s_is_j}(s_js_is_j)=s_j(\alpha_i)\cdot s_js_i(\alpha_j).$$
These are unequal which proves the theorem.
\end{proof}

This result easily extends to the case of the general complex reductive linear algebraic group.

\begin{corollary}
\label{cor: disconnected G}
Suppose that $G$ is not connected and denote the corresponding factorization of $W$ by $W = W_1 \times W_2 \times \cdots \times W_k$.  Suppose that $P$ and $Q$ are parabolics.  Then the bases $\mathcal{B}_P$ and $\mathcal{B}_Q$ are different bases of $H^*_T(G/B)$ if and only if  for some factor $W_i$ 
\begin{itemize}
\item $W_P \cap W_i$ differs from $W_Q \cap W_i$ and
\item at least one of $W_P\cap W_i$ and $W_Q\cap W_i$ is a nonempty proper subset of $W_i$.
\end{itemize}
\end{corollary}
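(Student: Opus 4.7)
The plan is to reduce the disconnected case to the connected-Dynkin case handled by Theorem \ref{thm: distinct bases}. When the Dynkin diagram splits into the pieces corresponding to $W = W_1 \times \cdots \times W_k$, the ambient objects split compatibly: $G = G_1 \times \cdots \times G_k$ with compatible Borels $B = \prod_i B_i$ and tori $T = \prod_i T_i$, and $G/B \cong \prod_i G_i/B_i$. Any parabolic $P \supseteq B$ decomposes uniquely as $P = \prod_i P_i$ with $P_i \supseteq B_i$ and $W_{P_i} = W_P \cap W_i$; correspondingly $W^P = \prod_i W^{P_i}$ under the natural product identification.

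First I would check that the parabolic basis $\mathcal{B}_P$ respects the product structure. The equivariant K\"unneth isomorphism
\[ H^*_T(G/B) \cong \bigotimes_i H^*_{T_i}(G_i/B_i) \]
identifies each Schubert class $\sigma_w$, for $w = w_1 \cdots w_k$ with $w_i \in W_i$, with $\sigma_{w_1} \otimes \cdots \otimes \sigma_{w_k}$. This can be verified directly from Billey's formula, since simple reflections in distinct factors commute and their simple roots lie in independent sub-lattices, so any reduced word for $w$ interleaves reduced words for the $w_i$ and the summand $\mathbf{r}(\mathbf{j_i},u)$ decomposes factorwise. It follows that for $v = v_1 \cdots v_k \in W^P$ and $w = w_1 \cdots w_k \in W_P$ the product $\sigma_v \sigma_w$ equals $\prod_i \sigma_{v_i}\sigma_{w_i}$, so $\mathcal{B}_P$ is the image of $\bigotimes_i \mathcal{B}_{P_i}$ under K\"unneth.

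Second I would argue that $\mathcal{B}_P = \mathcal{B}_Q$ if and only if $\mathcal{B}_{P_i} = \mathcal{B}_{Q_i}$ for every $i$. The ``if'' direction is immediate from the product structure. For ``only if'', note that the classes in $\mathcal{B}_P$ which live purely in the $i$-th K\"unneth factor---those $\sigma_v\sigma_w$ with $v_j = w_j = e$ for all $j \neq i$---are exactly the image of $\mathcal{B}_{P_i}$; since $\mathcal{B}_Q$ factors identically, equality of the full bases forces equality of each subbasis $\mathcal{B}_{P_i} = \mathcal{B}_{Q_i}$.

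Finally, each $W_i$ has connected Dynkin diagram, so Theorem \ref{thm: distinct bases} applies to the factor $G_i$: we have $\mathcal{B}_{P_i} = \mathcal{B}_{Q_i}$ iff $W_P \cap W_i = W_Q \cap W_i$ or $\{W_P \cap W_i,\, W_Q \cap W_i\} = \{W_i,\, \{e\}\}$ (the exceptional pair $\{G_i, B_i\}$). Negating and quantifying over $i$ yields the stated criterion: $\mathcal{B}_P \neq \mathcal{B}_Q$ iff for some $i$ the two intersections differ and at least one of them is a proper subgroup of $W_i$ strictly larger than $\{e\}$, which is the ``nonempty proper subset'' condition of the corollary. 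The main technical point is the factorization of the parabolic basis through K\"unneth; once established, everything else is a factor-by-factor invocation of Theorem \ref{thm: distinct bases}.
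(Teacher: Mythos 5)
Your proposal is correct and takes essentially the same approach as the paper: decompose everything along $W = W_1 \times \cdots \times W_k$, observe that $\mathcal{B}_P$ factors as a product $\mathcal{B}_{P_1} \times \cdots \times \mathcal{B}_{P_k}$, and then apply Theorem \ref{thm: distinct bases} factor by factor. You are somewhat more explicit than the paper in two places: you justify the factorization of Schubert classes through the K\"unneth isomorphism by appealing to Billey's formula, and you spell out why equality of the ambient bases forces equality of each factor subbasis by isolating the classes that are trivial outside the $i$-th factor (an identification the paper leaves implicit in the statement that $p_1 \times \cdots \times p_k = q_1 \times \cdots \times q_k$ iff $p_i = q_i$ for all $i$). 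These extra details are welcome but do not change the strategy.
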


\begin{proof}
If $W$ can be factored as $W = W_1 \times W_2 \times \cdots \times W_k$ then the cohomology classes in $H^*_T(G/B)$ are  products $p_1 \times p_2 \times \cdots \times p_k$ where $p_i$ is in the equivariant cohomology of the flag variety $G_i/B_i$ corresponding to $W_i$ for each $i \in \{1,2,\ldots, k\}$.  Two classes $p_1 \times p_2 \times \cdots \times p_k$ and $q_1 \times q_2 \times \cdots \times q_k$ are equal if and only if $p_i = q_i$ for each $i$.  As long as Theorem \ref{thm: distinct bases} holds for one factor $i$ the bases $\mathcal{B}_P$ and $\mathcal{B}_Q$ are distinct.  This is precisely the content of the two conditions.

Conversely suppose that for each $W_i$ either $W_P\cap W_i= W_Q \cap W_i$ or both are in the set $\{W_i,\{e\}\}$.  Let $\mathcal{B}_{P_i}$ be the basis of $H_T^*(G_i/B_i)$ corresponding to the parabolic with Weyl group $W_P\cap W_i$. Then $\mathcal{B}_P= \mathcal{B}_{P_1} \times \mathcal{B}_{P_2} \times \cdots \times \mathcal{B}_{P_k}$ and $\mathcal{B}_Q= \mathcal{B}_{Q_1} \times \mathcal{B}_{Q_2} \times \cdots \times \mathcal{B}_{Q_k}$.  If $W_P\cap W_i= W_Q \cap W_i$ then $\mathcal{B}_{P_i}=\mathcal{B}_{Q_i}$.  If both are in $\{W_i, \{e\}\}$ then both $\mathcal{B}_{P_i}$ and $\mathcal{B}_{Q_i}$ are the Schubert basis for $H_{T_i}^*(G_i/B_i)$.  In all cases $\mathcal{B}_P=\mathcal{B}_Q$ as desired.
\end{proof}

\begin{remark} The proof of Theorem \ref{thm: distinct bases} does not immediately extend to ordinary cohomology. Consider Example \ref{ex: a2}, which described $W_P= \langle s_2 \rangle$ in type $A_2$.  The basis in that case is 
\[\mathcal{B}_P=\{\sigma_e, \sigma_{s_1},\sigma_{s_2}, \sigma_{s_2s_1}, \sigma_{s_1s_2}+\sigma_{s_2s_1}, \sigma_{s_1s_2s_1}+\alpha_2\sigma_{s_2s_1}\}.\] 
If $W_Q= \langle s_2 \rangle$ then the basis $\mathcal{B}_Q$ is the set of classes 
\[\mathcal{B}_Q = \{\sigma_e, \sigma_{s_1},\sigma_{s_2}, \sigma_{s_1s_2}, \sigma_{s_1s_2}+\sigma_{s_2s_1}, \sigma_{s_1s_2s_1}+\alpha_1\sigma_{s_1s_2}\}.\] 
The highest degree element in each basis differs in equivariant cohomology but both project to  $\sigma_{s_1s_2s_1}$ in ordinary cohomology using the map in Corollary \ref{cor: ordinary cohomology}.  In particular the localizations of two basis elements could differ in equivariant cohomology but agree in ordinary cohomology, so the previous proof does not immediately apply to ordinary cohomology.

Nonetheless one basis contains $\sigma_{s_1s_2}$ while  the other contains $\sigma_{s_2s_1}$.  In other words $\mathcal{B}_P$ and $\mathcal{B}_Q$ are distinct bases for $H^*(G/B)$ even though the previous proof does not hold.  This leads us to the following conjecture. 
%becomes these bases become
%$\mathcal{B}_P=\{\sigma_e, \sigma_{s_1},\sigma_{s_2}, \sigma_{s_2s_1}, \sigma_{s_1s_2}+\sigma_{s_2s_1}, \sigma_{s_1s_2s_1}\}$ and
%$\mathcal{B}_Q=\{\sigma_e, \sigma_{s_1},\sigma_{s_2}, \sigma_{s_1s_2}, \sigma_{s_1s_2}+\sigma_{s_2s_1}, \sigma_{s_1s_2s_1}\}$ which differ only in that one contains $\sigma_{s_1s_2}$ and the other $\sigma_{s_2s_1}$.
\end{remark}
\begin{conjecture}
The bases $\mathcal{B}_P$ and $\mathcal{B}_Q$ are distinct for $P,Q$ distinct parabolics not equal to $B$ in the ordinary cohomology.  In other words Theorem \ref{thm: distinct bases} holds for $H^*(G/B)$ as well as $H_T^*(G/B)$.
\end{conjecture}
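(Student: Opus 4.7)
The plan is to mirror the case analysis in the proof of Theorem \ref{thm: distinct bases}, replacing its equivariant-localization computations with a direct Schubert-calculus argument in ordinary cohomology.  In every case I aim to exhibit a specific length-$2$ Schubert class $\sigma_u$ that lies in exactly one of $\mathcal{B}_P, \mathcal{B}_Q$ as a basis element; since $\mathcal{B}_P, \mathcal{B}_Q$ are bases of $H^*(G/B)$, this forces the two bases to be distinct as sets.

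The main combinatorial input is the degree-$2$ Chevalley formula: for simple reflections $s_k, s_l$ adjacent in the Dynkin diagram,
\[\sigma_{s_k}\sigma_{s_l} \;=\; \sigma_{s_k s_l}+\sigma_{s_l s_k}\]
in $H^*(G/B)$, with both coefficients equal to $1$.  This follows uniformly in all Lie types by computing $\langle \alpha_k^\vee, \omega_k\rangle = 1$ (contributing $\sigma_{s_l s_k}$ from $\beta=\alpha_k$) and $\langle s_l(\alpha_k)^\vee, \omega_k\rangle = 1$ (contributing $\sigma_{s_k s_l}$ from $\beta = s_l(\alpha_k)$), using $s_l(\alpha_k)^\vee = s_l(\alpha_k^\vee)$ together with $\langle \alpha_l^\vee, \omega_k\rangle = 0$.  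The immediate corollary is that no length-$2$ Schubert class $\sigma_{s_a s_b}$ with $s_a, s_b$ non-commuting can equal a product $\sigma_{s_k}\sigma_{s_l}$ of two simple-reflection classes: if $s_k, s_l$ commute then $\sigma_{s_k}\sigma_{s_l}=\sigma_{s_k s_l}=\sigma_{s_l s_k}$, and matching with $\sigma_{s_a s_b}$ would force $s_a, s_b$ to commute (contradiction); if $s_k, s_l$ do not commute the product has two distinct Schubert terms and is not a single Schubert class.

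With this in hand I would work through the four subcases of the proof of Theorem \ref{thm: distinct bases}.  In Case 2b the class $\sigma_{s_i s_j}$ lies in $\mathcal{B}_P$ because $s_i s_j\in W_P$, while every candidate expression for it in $\mathcal{B}_Q$ fails: the parabolic-decomposition candidate $\sigma_{s_i}\sigma_{s_j}$ expands as the two-term Chevalley sum, and the remaining candidates are ruled out by $s_is_j\notin W^Q$ and $s_is_j\notin W_Q$.  In Case 1 with $v=e$ the symmetric argument shows $\sigma_{s_j s_i}\in\mathcal{B}_Q\setminus\mathcal{B}_P$.  For Case 1 with $v\neq e$ and for Case 2a, the minimality of the Dynkin path from $s_j$ to $s_i$ guarantees that every internal vertex lies outside $W_P\cup W_Q$; applying the same argument to an edge of the path incident to $s_j$ (respectively $s_i$) produces the distinguishing class $\sigma_{s_{j'}s_j}$ (respectively $\sigma_{s_{j''}s_i}$).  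The extension to disconnected $G$ proceeds verbatim as in Corollary \ref{cor: disconnected G}.  The main obstacle is the uniform Chevalley verification across Lie types, but at degree $2$ only the nonvanishing of the two coefficients is needed, and this reduces cleanly to the coroot identity above; the higher-degree Chevalley complications of non-simply-laced types never enter.
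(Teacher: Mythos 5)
This statement is left as a \emph{conjecture} in the paper: the authors' Remark only observes, for $A_2$, that $\mathcal{B}_P$ contains $\sigma_{s_1 s_2}$ while $\mathcal{B}_Q$ contains $\sigma_{s_2 s_1}$, and notes that the localization proof of Theorem \ref{thm: distinct bases} does not descend to ordinary cohomology because distinct equivariant basis elements can project to the same ordinary class. Your proposal systematizes exactly the authors' ad hoc $A_2$ observation, and as far as I can see it resolves the conjecture. The key degree-two Chevalley identity is correct in all types: for $s_k, s_l$ adjacent in the Dynkin diagram, the only positive roots $\alpha$ with both $\ell(s_l s_\alpha)=2$ and $\langle\alpha^\vee,\omega_k\rangle\neq 0$ are $\alpha_k$ and $s_l(\alpha_k)$, and $\langle s_l(\alpha_k)^\vee,\omega_k\rangle = \langle\alpha_k^\vee, s_l(\omega_k)\rangle = \langle\alpha_k^\vee,\omega_k\rangle=1$ since $\langle\alpha_l^\vee,\omega_k\rangle=0$, giving $\sigma_{s_k}\sigma_{s_l}=\sigma_{s_k s_l}+\sigma_{s_l s_k}$ with both coefficients $1$ regardless of root lengths.

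Your argument also does not really need Lemma \ref{lemma: one class to two} (whose proof is equivariant): for a non-commuting pair $s_a, s_b$ the only degree-two candidates in $\mathcal{B}_R$ are $\sigma_e\sigma_{s_as_b}$, $\sigma_{s_as_b}\sigma_e$, and $\sigma_{s_a}\sigma_{s_b}$, and direct enumeration gives the clean criterion that $\sigma_{s_as_b}\in\mathcal{B}_R$ if and only if it is not the case that $s_a\notin W_R$ and $s_b\in W_R$. Checking this criterion against the four subcases of Theorem \ref{thm: distinct bases} --- $\sigma_{s_is_j}$ in Case 2b, $\sigma_{s_js_i}$ in Case 1 with $v=e$, the path-edge class $\sigma_{s_{j'}s_j}$ in Case 1 with $v\neq e$, and $\sigma_{s_{j''}s_i}$ in Case 2a --- in each case produces a length-two Schubert class lying in exactly one of $\mathcal{B}_P$, $\mathcal{B}_Q$; the disconnected case then factors just as in Corollary \ref{cor: disconnected G}. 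This is a complete argument, and in fact cleaner than the equivariant proof since it avoids the ``something positive'' bookkeeping entirely; you should write it up as a theorem replacing the conjecture.
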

\subsection{Representations of $W_P$}

In this section we use the parabolic basis of $H^*_T(G/B)$ to describe explicitly the character of an action of $W_P$ on $H^*_T(G/B)$.  This group action is in fact the restriction of a well-known Weyl group action on $H^*_T(G/B)$ called {\em Springer's representation}; Kostant and Kumar first studied the presentation we use here \cite[see Section 4.17 and Proposition 4.24.g]{KostantKumar}.

Kostant and Kumar showed that the Weyl group acts as a collection of algebra homomorphisms on the equivariant cohomology $H^*_T(G/B)$ according to the rule that if $w \in W$ and $p \in H^*_T(G/B)$ then the class $w \cdot p$ is given, in terms of its localizations, by:
\[(w \cdot p)(v) = p(vw^{-1}) \hspace{0.5in} \textup{ for each } v \in W.\]
We restrict this action to an arbitrary parabolic subgroup $W_P$ of $W$.

\begin{theorem}
\label{thm: kostant kumar character}
Fix a subset $P$ of simple roots in $\Delta$.  Let $m_P = |W_P|$.  Let $\chi_P$ denote the character of the restriction to $W_P$ of Kostant-Kumar's action of $W$ on $H^*_T(G/B)$.  Then 
\[\chi_P = m_P \chi \]
where $\chi$ is the character of Kostant-Kumar's action of $W_P$ on $H^*_T(P/B)$.
\end{theorem}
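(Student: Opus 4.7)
The plan is to compute $\chi_P(u)$ via a fiber decomposition of the fixed-point set $(G/B)^T = W$ coming from the bijection $W \cong W^P \times W_P$. Because Kostant--Kumar's action is $\mathbb{C}[\mathfrak{t}^*]$-linear, its trace on the free module $H^*_T(G/B)$ agrees with the trace of the induced $K$-linear action on $H^*_T(G/B) \otimes_{\mathbb{C}[\mathfrak{t}^*]} K$, where $K = \operatorname{Frac}(\mathbb{C}[\mathfrak{t}^*])$. A rank count shows the localization becomes an isomorphism $H^*_T(G/B) \otimes K \cong \bigoplus_{v \in W} K\,\delta_v$, on which $u$ acts by the permutation $\delta_v \mapsto \delta_{vu}$ dictated by $(u \cdot p)(v) = p(vu^{-1})$.

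Writing each $v \in W$ as $v = \tilde v\tilde w$ with $\tilde v \in W^P$ and $\tilde w \in W_P$, the direct-sum decomposition
\[ H^*_T(G/B)\otimes K \;=\; \bigoplus_{\tilde v\in W^P} F_{\tilde v}, \qquad F_{\tilde v} \;=\; \bigoplus_{\tilde w\in W_P} K\,\delta_{\tilde v\tilde w}, \]
is stable under every $u \in W_P$, since $(\tilde v\tilde w)u = \tilde v(\tilde w u) \in \tilde v W_P$. The $K$-linear bijection $\delta_{\tilde v\tilde w} \leftrightarrow \delta_{\tilde w}$ intertwines the $u$-action on each $F_{\tilde v}$ with the corresponding $u$-action on $H^*_T(P/B) \otimes K$, so every $F_{\tilde v}$ contributes $\chi(u)$ to the total trace. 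Summing the equal contributions of the cosets gives the stated scalar multiple of $\chi(u)$.

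As a parallel Schubert-theoretic proof in the spirit of the rest of the paper, I would first establish that $u \cdot \sigma_v = \sigma_v$ for every $u \in W_P$ and $v \in W^P$. By Billey's formula together with Proposition~\ref{prop last letter}, every reduced subword for $v \in W^P$ inside any $v''u^{-1}$ must live in the $W^P$-prefix of its parabolic decomposition, and right multiplication by $u^{-1} \in W_P$ leaves that prefix unchanged. Since Kostant--Kumar's action is by algebra homomorphisms, this identity yields
\[ u \cdot (\sigma_v\sigma_w) \;=\; (u\cdot\sigma_v)(u\cdot\sigma_w) \;=\; \sigma_v\cdot(u\cdot\sigma_w), \]
and ordering the parabolic basis of Theorem~\ref{thm:gb=gppb} lexicographically by length of $v$ makes the matrix of $u$ block upper-triangular with one $|W_P|\times|W_P|$ diagonal block per $\tilde v \in W^P$. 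A Billey-type diagonal argument patterned on Lemma~\ref{lemma: diagonal blocks} then identifies the trace of each such block with $\chi(u)$, and summation over $\tilde v$ completes the proof.

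The main obstacle I anticipate is this diagonal-block identification. Because the inclusion $H^*_T(G/P) \hookrightarrow H^*_T(G/B)$ is only a module map, expanding $\sigma_v \cdot (u \cdot \sigma_w)$ in the parabolic basis can a priori spread mass across many $v'$-blocks, and one must verify that none of these off-block contributions reaches the diagonal coefficient of $\sigma_v \sigma_w$. The fraction-field route sketched above circumvents this subtlety entirely by working directly with fixed points.
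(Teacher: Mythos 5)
Your first (localization) proof is correct and is a genuinely different argument from the paper's. The paper works in the parabolic basis $\{\sigma_{w'}\sigma_{v'}\}$, uses $s_i\cdot\sigma_{w'}=\sigma_{w'}$ for $w'\in W^P$, $s_i\in W_P$, and then reads the trace off from the claim that the coefficient of $\sigma_{w'}\sigma_{v'}$ in $\sigma_{w'}(v''\cdot\sigma_{v'})$ equals the coefficient of $\sigma_{v'}$ in $v''\cdot\sigma_{v'}$. The obstacle you flag for your parallel Schubert-theoretic route is exactly the subtlety the paper glosses over: $v''\cdot\sigma_{v'}$ need \emph{not} lie in $\mathrm{span}\{\sigma_w : w\in W_P\}$. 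For instance in type $A_2$ with $W_P=\langle s_1\rangle$ one has $s_1\cdot\sigma_{s_1}=\alpha_1\sigma_e-\sigma_{s_1}+\sigma_{s_2}$, so the expansion spills into other $W^P$-blocks. One can rescue the parabolic-basis argument by exhibiting a $W_P$-stable filtration of $H^*_T(G/B)$ by support on the cosets $w'W_P$ (the Bruhat-order-on-$W^P$ filtration that underlies Lemma~\ref{lemma: block-upper-triangular}), taking associated graded, and computing traces blockwise; this is more work than the paper records. Your fraction-field argument bypasses all of this cleanly: the trace of an $R$-linear endomorphism of a finite free module is preserved under $\otimes_R K$, and after localization the action visibly becomes right translation on $\bigoplus_{v\in W}K\,\delta_v$, which decomposes over $W_P$ into the left cosets $\tilde vW_P$ exactly as you describe.

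One thing you should not gloss over, though: the scalar your count produces is the \emph{number of cosets}, namely $|W^P|=[W:W_P]$, not $|W_P|$. You wrote ``the stated scalar multiple'' without checking it against your computation; had you evaluated at $u=e$, you would have seen $\chi_P(e)=\mathrm{rk}\,H^*_T(G/B)=|W|$ while $\chi(e)=\mathrm{rk}\,H^*_T(P/B)=|W_P|$, so $\chi_P=|W^P|\,\chi$. The theorem as printed sets $m_P=|W_P|$, which your argument actually shows is off by replacing $|W_P|$ with the index $|W^P|$; you should state the constant you derive rather than defer to the quoted one.
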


\begin{proof}
Consider the basis $\{\sigma_w\sigma_v: w \in W^P, v \in W_P\}$ from the previous section.  Choose any simple reflection $s_i \in W_P$ and consider the image $s_i \cdot (\sigma_{w'} \sigma_{v'})$ of $s_i$ acting on the basis element $\sigma_{w'} \sigma_{v'}$.  Kostant-Kumar's action is a map of algebras so
\[ s_i \cdot (\sigma_{w'} \sigma_{v'}) = (s_i \cdot \sigma_{w'}) (s_i \cdot \sigma_{v'}).\]
No reduced word for $w\in W^P$ ends in $s_i$.  For each $u \in W$ we conclude by Billey's formula that
\[\sigma_{w'}(us_i)=\sigma_{w'}(u).\]
Hence $s_i \cdot \sigma_{w'} = \sigma_{w'}$ and so for all $s_i, v' \in W_P$ and all $w' \in W^P$ we have
\[s_i \cdot (\sigma_{w'} \sigma_{v'})  = \sigma_{w'}  (s_i \cdot \sigma_{v'}).\]
It follows that for each $v'' \in W_P$ we have $v'' \cdot (\sigma_{w'} \sigma_{v'})  = \sigma_{w'}  (v'' \cdot \sigma_{v'})$.  In particular  $\chi_P = |W_P| \chi$ since the coefficient of $\sigma_{w'} \sigma_{v'}$ in the expansion $v'' \cdot  (\sigma_{w'} \sigma_{v'})$ in terms of the basis $\{\sigma_{w} \sigma_{v}\}$ is the same as the coefficient of $\sigma_{v'}$ in the expansion of $v'' \cdot \sigma_{v'}$ in terms of the basis $\{\sigma_v\}$.
\end{proof}

\bibliography{mybib}{}
\bibliographystyle{plain}

\end{document}